\renewenvironment{proof}[1][]{\par \noindent {\bf Proof:#1}\ }{\hfill$\Box$\medskip}
\newenvironment{proofof}[1][]{\par \noindent {\bf Proof of Theorem~\ref{th:approxDelta3}:#1}\ }{\hfill$\Box$\medskip}
\newcommand{\drain}{\vdash}
\newcommand{\forg}{\bot}
\newcommand{\select}{\top}
\newcommand{\sm}{\setminus}
\newcommand{\gm}{-}
\tikzstyle{vertex}=[
\tikzstyle{select}=[
\newcommand{\pa}[2]{\node[vertex,circle,fill=yellow] (#2) at (\x+#1+\y) {};}
\newcommand{\paa}[3]{\node[vertex,circle,fill=yellow,#3] (#2) at (\x+#1+\y) {};}
\newcommand{\pb}[2]{\node[vertex, rectangle,fill=green] (#2) at (\x+#1+\y) {};}
\newcommand{\pbb}[3]{\node[vertex, rectangle,fill=green,#3] (#2) at (\x+#1+\y) {};}
\newcommand{\pc}[2]{\node[vertex,rectangle, fill=green] (c) at (\x+#1+\y) {#2};}
\newcommand{\outh}[1]{\draw (#1) -- +(0.3, 0.1);}
\newcommand{\outb}[1]{\draw (#1) -- +(0.3, -0.1);}
\newcommand{\capt}[2]{\draw[white] (#1) -- node[above,sloped,black] {#2} +(2,0)}
\newcommand{\outtt}[1]{\draw (#1) -- +(-0.2, 0.6);}
\newcommand{\com}[2]{\node at (\x+#1+\y) {#2};}
\title{Uniquely restricted matchings\\and edge colorings}
\author{Julien Baste\inst{1}, Dieter Rautenbach\inst{2}, and Ignasi Sau\inst{1,3}}
\authorrunning{Julien Baste, Dieter Rautenbach, and  Ignasi Sau}
\titlerunning{Uniquely restricted matchings and edge colorings}
\institute{CNRS, LIRMM, Universit\'{e} de Montpellier, Montpellier, France\\
\email{baste@lirmm.fr,  sau@lirmm.fr}
\and
Institute of Optimization and Operations Research, Ulm University, Ulm, Germany\\ \email{dieter.rautenbach@uni-ulm.de}
\and
Departamento de Matem\'{a}tica, Universidade Federal do Cear\'{a}, Fortaleza, Brazil}
\begin{document}
\maketitle

\begin{abstract}
A matching in a graph is {\it uniquely restricted} if no other matching covers exactly the same set of vertices.
This notion was defined by Golumbic, Hirst, and Lewenstein and studied in a number of articles.
Our contribution is twofold.
We provide approximation algorithms
for computing a uniquely restricted matching of maximum size
in some bipartite graphs.
In particular, we achieve a ratio of $9/5$ for subcubic bipartite graphs,
improving over a $2$-approximation algorithm proposed by Mishra.
Furthermore, we study the {\it uniquely restricted chromatic index} of a graph,
defined as the minimum number of uniquely restricted matchings into which its edge set can be partitioned.
We provide tight upper bounds in terms of the maximum degree
and characterize all extremal graphs.
Our constructive proofs yield efficient algorithms to determine the corresponding edge colorings.\\[3mm]
{\bf Keywords:}
uniquely restricted matching;
bipartite graph;
approximation algorithm;
edge coloring;
subcubic graph.
\end{abstract}

\section{Introduction}\label{sec:intro}

Matchings in graphs are among the most fundamental and well-studied objects in combinatorial optimization~\cite{lopl,yuli}. While classical matchings lead to many efficiently solvable problems, more restricted types of matchings~\cite{stva} are often intractable;
induced matchings~\cite{brmo,ca1,ca2,fagyshtu,fashgytu,hera,hoqitr,jorasa,kamnmu,loz}
being a prominent example.
Here we study the so-called uniquely restricted matchings,
which were introduced by Golumbic, Hirst, and Lewenstein~\cite{gohile}
and studied in a number of papers~\cite{lema,lm2,lm3,mi,frjaja,peraso}.
We also consider the corresponding edge coloring notion.

Before we explain our contribution and discuss related research,
we collect some terminology and notation
(cf. e.g.~\cite{diestel2005} for undefined terms).
We consider finite, simple, and undirected graphs.
A {\it matching} in a graph $G$~\cite{lopl} is a set of pairwise non-adjacent edges of $G$.
For a matching $M$, let $V(M)$ be the set of vertices incident with an edge in $M$.
A matching $M$ in a graph $G$ is {\it induced}~\cite{fagyshtu} if the subgraph
$G[V(M)]$ of $G$ induced by $V(M)$ is $1$-regular.
Golumbic, Hirst, and Lewenstein~\cite{gohile} define a matching $M$ in a graph $G$ to be {\it uniquely restricted}
if there is no matching $M'$ in $G$ with $M'\not=M$ and $V(M')=V(M)$,
that is, no other matching covers exactly the same set of vertices.
It is easy to see that a matching $M$ in $G$ is uniquely restricted
if and only if there is no {\it $M$-alternating cycle} in $G$,
which is a cycle in $G$ that alternates between edges in $M$ and edges not in $M$.
Let
the {\it matching number} $\nu(G)$,
the {\it strong matching number} $\nu_s(G)$, and
the {\it uniquely restricted matching number} $\nu_{ur}(G)$ of $G$
be the maximum size of a matching,
an induced matching, and
a uniquely restricted matching in $G$, respectively.
Since every induced matching is uniquely restricted, we obtain
$$\nu_s(G)\leq \nu_{ur}(G)\leq \nu(G)$$
for every graph $G$.

Each type of matching naturally leads to an edge coloring notion.
For a graph $G$, let $\chi'(G)$ be the {\it chromatic index} of $G$,
which is the minimum number of matchings
into which the edge set $E(G)$ of $G$ can be partitioned.
Similarly, let the {\it strong chromatic index} $\chi'_s(G)$~\cite{fashgytu}
and the {\it uniquely restricted chromatic index} $\chi'_{ur}(G)$ of $G$
be the minimum number of induced matchings and uniquely restricted matchings
into which the edge set of $G$ can be partitioned, respectively.
A partition of the edges of a graph $G$ into uniquely restricted matchings
is a \emph{uniquely restricted edge coloring} of $G$.
Another related notion are {\it acyclic edge colorings},
which are partitions of the edge set into matchings
such that the union of every two of the matchings is a forest.
The minimum number of matchings in an acyclic edge coloring of a graph $G$
is its {\it acyclic chromatic index} $a'(G)$~\cite{alsuza,fi}.
Exploiting the obvious relations between the different edge coloring notions,
we obtain
\begin{eqnarray}\label{echichain}
\chi'(G)\leq a'(G)\leq \chi'_{ur}(G)\leq \chi'_s(G)
\end{eqnarray}
for every graph $G$.


\medskip

\noindent Stockmeyer and Vazirani~\cite{stva}
showed that computing the strong matching number is \texttt{NP}-hard.
Their result was strengthened in many ways,
and also restricted graph classes where the strong matching number can be determined efficiently were studied~\cite{brmo,ca1,ca2,loz}.
Golumbic, Hirst, and Lewenstein~\cite{gohile} showed that
it is {\texttt {NP}}-hard to determine $\nu_{ur}(G)$
for a given bipartite or split graph $G$.
Mishra~\cite{mi} strengthened this
by showing that it is not possible to approximate $\nu_{ur}(G)$
within a factor of $O(n^{\frac{1}{3}- \epsilon})$
for any $\epsilon>0$, unless \texttt{NP}$=$\texttt{ZPP},
even when restricted to bipartite, split, chordal or comparability graphs of order $n$.
Furthermore, he showed that $\nu_{ur}(G)$ is {\texttt {APX}}-complete
for subcubic bipartite graphs.

On the positive side,
Golumbic, Hirst, and Lewenstein~\cite{gohile}
described efficient algorithms that determine $\nu_{ur}(G)$
for cacti, threshold graphs, and proper interval graphs.
Solving a problem from~\cite{gohile},
Francis, Jacob, and Jana~\cite{frjaja} described an efficient algorithm for $\nu_{ur}(G)$ in interval graphs.
Solving yet another problem from~\cite{gohile},
Penso, Rautenbach, and Souza~\cite{peraso} showed that the graphs $G$
with $\nu(G)=\nu_{ur}(G)$ can be recognized in polynomial time.
Complementing his hardness results,
Mishra~\cite{mi} proposed a $2$-approximation algorithm for cubic bipartite graphs.

While $\chi'(G)$ of a graph $G$ of maximum degree $\Delta$ is either $\Delta$ or $\Delta+1$~\cite{vi},
Erd\H{o}s and Ne\v{s}et\v{r}il~\cite{fashgytu} conjectured $\chi_s'(G)\leq \frac{5}{4}\Delta^2$, and much of the research on the strong chromatic index is motivated by this conjecture.
Building on earlier work of Molloy and Reed~\cite{more},
Bruhn and Joos~\cite{brjo} showed $\chi_s'(G)\leq 1.93\Delta^2$ provided that $\Delta$ is sufficiently large.
For further results on the strong chromatic index we refer to~\cite{an,hov,hmrv,fashgytu}.

Fiam\v{c}ik~\cite{fi} and Alon, Sudakov, and Zaks~\cite{alsuza} conjectured that
every graph of maximum degree $\Delta$ has an acyclic edge coloring using no more than $\Delta+2$ colors.
See~\cite{espa,caperewa} for further references and the currently best known results concerning general graphs and graphs of large girth.

In view of the famous open conjectures on $\chi_s'(G)$ and $a'(G)$,
the inequality chain (\ref{echichain})
motivates to study upper bounds on $\chi_{ur}(G)$
in terms of the maximum degree $\Delta$ of a graph $G$.

\medskip

\noindent Our contribution is twofold. We present approximations algorithms for  $\nu_{ur}(G)$ in some bipartite graphs in Sections~\ref{sec:approx}-\ref{ap:Delta3},
and tight bounds on $\chi_{ur}'(G)$ in Section \ref{sec:edge-colorings}.

Concerning the algorithms, improving on Mishra's $2$-approximation algorithm~\cite{mi},
we describe in Section~\ref{ap:Delta3} a $9/5$-approximation algorithm for computing $\nu_{ur}(G)$ of a given bipartite subcubic graph $G$.
This algorithm requires some complicated preprocessing based on
detailed local analysis. In order to illustrate our general approach in a cleaner setting,
we first describe in Section~\ref{sec:approx} algorithms for $C_4$-free bipartite graphs of arbitrary maximum degree.

Concerning the uniquely restricted chromatic index,
we achieve best-possible upper bounds in terms of the maximum degree,
and even characterize all extremal graphs.
Since our proofs are constructive,
it is easy to extract efficient algorithms finding the corresponding edge colorings.

We conclude with some open problems in Section \ref{sec:conclusions}.

\section{Approximation algorithms for $C_4$-free bipartite graphs}\label{sec:approx}

Before we proceed to present the $9/5$-approximation algorithm for subcubic bipartite graphs in Section~\ref{ap:Delta3}, we first describe in this section an approximation algorithm for the $C_4$-free case.
The proof of the next lemma contains the main algorithmic ingredients.
Note that the size of the smaller partite set in a bipartite graph
is always an upper bound on the uniquely restricted matching number.

For an integer $k$, let $[k]$ denote the set of positive integers between $1$ and $k$. For a graph $G$, let $n(G)$ denote its number of vertices.

\begin{lemma}\label{lemmanew1}
Let $\Delta\geq 3$ be an integer.
If $G$ is a connected $C_4$-free bipartite graph of maximum degree at most $\Delta$
with partite sets $A$ and $B$
such that every vertex in $A$ has degree at least $2$,
and some vertex in $B$ has degree less than $\Delta$,
then $G$ has a uniquely restricted matching $M$
of size at least
$\frac{(\Delta-1)^2+(\Delta-2)}{(\Delta-1)^3+(\Delta-2)}|A|.$
Furthermore, such a matching can be found in polynomial time.
\end{lemma}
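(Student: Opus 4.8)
The plan is to build the matching greedily while preserving the uniquely restricted property, and then to bound the number of unmatched vertices of $A$ by a charging argument. The starting point is the characterization that $M$ is uniquely restricted if and only if $G$ has no $M$-alternating cycle. Since $G$ is $C_4$-free and bipartite, every cycle, and hence every alternating cycle, has length at least $6$ and therefore meets $M$ in at least three edges. I would use this repeatedly: if $M$ is uniquely restricted and we add an edge $ab$ with $a,b$ currently unmatched, the only way $M\cup\{ab\}$ can fail to be uniquely restricted is that $G$ contains an $M$-alternating path of length at least five joining $a$ and $b$ whose end-edges are non-matching; in particular at least two further edges of $M$ lie on it, close to $a$ and $b$.

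To organise the greedy I would root a breadth-first search at a vertex $b^\ast\in B$ of degree less than $\Delta$, whose existence together with connectivity is exactly what injects the slack responsible for the additive $(\Delta-2)$ corrections in the bound. Processing the vertices of $A$ in this order, for each still-unmatched $a$ I would try to match it to one of its neighbours: since $a$ has degree at least $2$ there are at least two candidates, and the $C_4$-freeness guarantees that the local alternating-cycle obstruction above can be blamed on few matched edges, so that at least one candidate keeps the matching uniquely restricted unless all neighbours of $a$ are genuinely blocked. This yields, in polynomial time, a uniquely restricted matching $M$ that is maximal with respect to this rule.

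It then remains to bound $U:=|A\setminus V(M)|$. The goal inequality is equivalent to $U\leq \frac{(\Delta-1)^2(\Delta-2)}{(\Delta-1)^2+(\Delta-2)}\,|M|$, since $|A|=|M|+U$ in a bipartite graph. I would prove this by charging one unit from each $a\in U$ to nearby edges of $M$: every neighbour $b$ of $a$ is blocked, either because $b$ is matched, in which case I charge the matching edge at $b$, or because adding $ab$ would close an alternating cycle, in which case I charge the matched edges witnessing it. Using $C_4$-freeness, any two vertices of $A$ share at most one common neighbour, which keeps these witnesses from being reused too often; combined with the degree bound $\Delta$ and the deficiency at $b^\ast$, one bounds the total charge received by a single edge of $M$ by $\frac{(\Delta-1)^2(\Delta-2)}{(\Delta-1)^2+(\Delta-2)}$, giving the claim.

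The main obstacle is to make the charging tight enough to reach exactly this constant rather than the easier bound of roughly $\Delta-1$ per edge. This forces a careful local analysis distinguishing the matched-neighbour and alternating-cycle cases, showing in particular that the $\Delta-1$ neighbours of a matched $B$-vertex cannot all be simultaneously unmatched and blocked only through that vertex, and accounting precisely for the boundary contribution of $b^\ast$; it is exactly this delicate bookkeeping that the subcubic algorithm of Section~\ref{ap:Delta3} refines further. Establishing that the greedy never stalls before reaching the charged set is also part of the argument, but this should follow from the degree-at-least-$2$ hypothesis together with the long-cycle structure forced by $C_4$-freeness.
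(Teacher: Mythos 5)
Your reduction of the statement to the inequality $|A\setminus V(M)|\leq \frac{(\Delta-1)^2(\Delta-2)}{(\Delta-1)^2+(\Delta-2)}\,|M|$ is algebraically correct, and your observation that $C_4$-freeness forces every alternating cycle to have length at least $6$ (hence at least two edges of $M$ besides the newly added edge) is the right basic tool. But the proof has a genuine gap exactly where the lemma lives: the claim that one can charge each unmatched vertex of $A$ to nearby matched edges so that no edge of $M$ receives more than $\frac{(\Delta-1)^2(\Delta-2)}{(\Delta-1)^2+(\Delta-2)}$ is asserted as a goal, not proven, and you yourself defer the ``delicate bookkeeping'' that would establish it. Note how strong that claim is: for $\Delta=3$ the per-edge budget is $4/5<1$, yet a matched edge $wb$ with $b\in B$ of degree $3$ may see both of its other $A$-neighbours unmatched, so any charging scheme must route most of their two units of charge elsewhere; nothing in the proposal explains why such rerouting is always possible, and maximality of a greedy matching alone is far too weak (it only gives the generic factor $1/2$, whereas the claimed constant is $5/9$ for $\Delta=3$). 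It is in fact doubtful that an arbitrary matching produced by your one-pass greedy over $A$ in BFS order satisfies the bound at all; at minimum, this is the entire content of the lemma and it is missing.

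For comparison, the paper does not analyze a completed matching post hoc. It grows a set $U$ and the matching $M$ simultaneously while maintaining the global invariant $(\Delta-1)^2\big((\Delta-2)s-(d+f)\big)\geq(\Delta-2)f$, where the processed-but-unmatched vertices of $A$ are split into those that still have a neighbour outside $U$ (counted by $d$) and those that do not (counted by $f$); the stated bound is just this invariant read off at termination, when $d=0$. Two features of that process are essential and are unavailable to your greedy: first, the order is driven by choosing a vertex $u\in B\setminus U$ minimizing its residual degree $d_{\bar U}(u)$ (this is where the deficient vertex of $B$ and connectivity enter, not as a boundary term in a charging argument); second, in the case $d_{\bar U}(u)=1$ the algorithm matches new $B$-vertices \emph{back} to previously processed, still-unmatched $A$-vertices, with $C_4$-freeness guaranteeing these rescue edges keep the matching uniquely restricted. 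A single forward pass over $A$ can never perform these rescues, and it is precisely they that keep the potential above the threshold. To repair your argument you would either have to prove the global amortized charge bound for greedy-maximal matchings (which I do not believe holds) or reintroduce a potential function together with the rescue step, at which point you have reconstructed the paper's proof.
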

\begin{proof}
We give an algorithmic proof of the lower bound such that the running time of the corresponding algorithm is polynomial in $n(G)$, which immediately implies the second part of the statement.
Therefore, let $G$ be as in the statement.
Throughout the execution of our algorithm, as illustrated in Fig.~\ref{fig:example}, we maintain a pair $(U,M)$ such that
\begin{enumerate}[(a)]
\item\label{p0} $U$ is a subset of $V(G)$,
\item\label{p1} $M$ is a uniquely restricted matching with $V(M)\subseteq U$,
\item\label{p2} no vertex in $B\cap U$ has a neighbor in $A\setminus U$,
\item\label{p3} no vertex in $B\setminus U$ has all its neighbors in $A\cap U$,
\item\label{p4} if

$s$ vertices in $A\cap U$ are incident with an edge in $M$,

$d$ vertices in $A\cap U$ are not incident with an edge in $M$ but have a neighbor in $B\setminus U$, and

$f$ vertices in $A\cap U$ are neither incident with an edge in $M$ nor have a neighbor in $B\setminus U$, then
\begin{eqnarray}\label{e1}
(\Delta-1)^2\Big((\Delta-2)s-(d+f)\Big)\geq (\Delta-2)f.
\end{eqnarray}
\end{enumerate}

\begin{figure}[htp]
\vspace{-.65cm}
\begin{center}
\includegraphics[width=.56\textwidth]{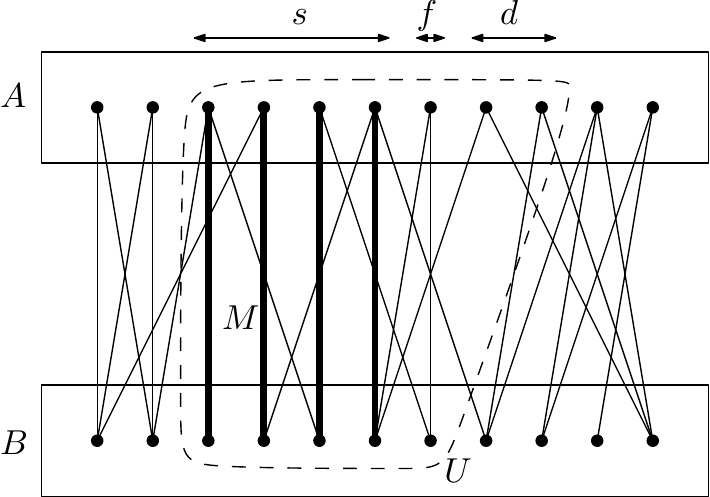}
\end{center}\vspace{-.25cm}
\caption{Example for $\Delta = 3$ of the parameters defined in the proof of Lemma~\ref{lemmanew1}. The set $U$ is dashed, and the uniquely restricted matching $M$ corresponds to the thicker edges.}
\label{fig:example}
\end{figure}
\noindent Initially, let $U$ and $M$ be empty sets.
Note that properties (\ref{p0}) to (\ref{p4}) hold.

As long as $U$ is a proper subset of $V(G)$,
we iteratively replace the pair $(U,M)$ with a pair $(U',M')$ such that
$U$ is a proper subset of $U'$,
$M$ is a proper subset of $M'$, and
properties (\ref{p0}) to (\ref{p4}) hold for $(U',M')$.
Let $s'$, $d'$, and $f'$ denote the updated values considered in (\ref{p4}).
Once $U=V(G)$, we have $s=|M|$, $d=0$, and $f=|A|-|M|$,
and (\ref{e1}) implies the stated lower bound on $|M|$.

We proceed to the description of the extension operations.
Therefore, suppose that $U$ is a proper subset of $V(G)$.
Since $G$ is connected, and some vertex in $B$ has degree less than $\Delta$,
some vertex $u$ in $B\setminus U$ has less than $\Delta$ neighbors in $A\setminus U$,
that is, if $d_{\bar{U}}(u)=|N_G(u)\setminus U|$, then $1\leq d_{\bar{U}}(u)\leq \Delta-1$, where the first inequality follows from property~(\ref{p3}).
We choose $u \in B \sm U$ such that $d_{\bar{U}}(u)$ is as small as possible.


\paragraph{ {\bf Case 1:}} $d_{\bar{U}}(u)=1$.\smallskip


\noindent Let $v$ be the unique neighbor of $u$ in $A\setminus U$.
Let $\{ u_1,\ldots,u_k\}$ be the set of all vertices $u$ in $B\setminus U$ with $N_G(u)\setminus U=\{ v\}$, and note that $1 \leq k \leq \Delta$.
Let $U'=U\cup \{ u_1,\ldots,u_k,v\}$.
For some integer $\ell \leq k$, we may assume that
$\{ u_1,\ldots,u_{\ell}\}$ is the set of those $u_i$ with $i\in [k]$
such that $u_i$ has a neighbor in $A\cap U$, and no neighbor of $u_i$ in $A\cap U$ is incident with $M$.
Note that every vertex $u_i$ with $i\in [k]\setminus [\ell]$
either has no neighbor in $A\cap U$
or has some neighbor in $A\cap U$ that is incident with $M$.

First, suppose that $\ell\geq 2$.
Let $M'$ arise from $M$ by adding, for every $i\in [\ell]$, an edge between $u_i$ and a neighbor $w_i$ of $u_i$ in $A\cap U$. Note that all these neighbors $w_i$ in $A \cap U$ are distinct. Indeed, if two vertices $u_i$ and $u_j$ have a common neighbor $w$ in $A \cap U$, then the set of vertices $\{v, u_i, u_j, w\}$ would induce a $C_4$ in $G$.
Note also that $M'$ is indeed a uniquely restricted matching, as if there exists an edge $u_iw_j$ with $i,j \in [\ell]$ and $i \neq j$ that could potentially create an $M'$-alternating cycle, then the set of vertices $\{v, u_i, u_j, w_j\}$ would again induce a $C_4$ in $G$. Clearly, replacing $(U,M)$ with $(U',M')$, we maintain properties (\ref{p0}) to (\ref{p3}), and $s'=s+\ell$.
Let $n_d$ be the number of vertices in $A\cap U$ that are not incident with an edge in $M'$,
have a neighbor in $B\setminus U$,
but do not have a neighbor in $B\setminus U'$; note that each such vertex has a neighbor in the set $\{u_1, \ldots, u_k\}$.
As every vertex in $\{u_1, \ldots, u_k\}$ is neighbor of $v$ and of a vertex incident with an edge in $M'$, it holds that $n_d\leq k(\Delta-2)\leq \Delta(\Delta-2)$.
If $v$ has a neighbor in $B\setminus U'$, then $d'=d-n_d+1$ and $f'=f+n_d$, and,
if $v$ has no neighbor in $B\setminus U'$, then $d'=d-n_d$ and $f'=f+n_d+1$.
In both cases $d'+f'=d+f+1$ and $f'\leq f+n_d+1$.
Since $\frac{(\Delta-1)^2}{\Delta-2}\Big((\Delta-2)\ell-1\Big)\geq \Delta(\Delta-2)+1\geq n_d+1$,
property (\ref{p4}) is maintained.

Next, suppose that $\ell\leq 1$.
Let $M'$ arise from $M$ by adding the edge $u_1v$.
Clearly, replacing $(U,M)$ with $(U',M')$, we maintain properties (\ref{p0}) to (\ref{p3}), and $s'=s+1$.
Defining $n_d$ exactly as above, we obtain
$n_d\leq k(\Delta-2)+\ell\leq \Delta(\Delta-2)+1$,
$d'=d-n_d$, and
$f'=f+n_d$.
Since $\frac{(\Delta-1)^2}{\Delta-2}(\Delta-2)\geq \Delta(\Delta-2)+1\geq n_d$,
property (\ref{p4}) is maintained.

\paragraph{ {\bf Case 2:}} $2\leq d_{\bar{U}}(u)\leq \Delta-1$.\smallskip

\noindent Let $\{ v_1,\ldots,v_k\}=N_G(u)\setminus U$ and let $U'=U\cup \{ u,v_1,\ldots,v_k\}$. Note that $2 \leq k \leq \Delta -1$.

First, suppose that $u$ has a neighbor $v$ in $A\cap U$, and that no neighbor of $u$ in $A\cap U$ is incident with $M$. Let $M'$ arise from $M$ by adding the edge $uv$.
Clearly, replacing $(U,M)$ with $(U',M')$, we maintain properties (\ref{p0}) to (\ref{p2}), and $s'=s+1$. Let us prove that property~(\ref{p3})~is also maintained. Since $G$ has no $C_4$ and $k\geq 2$, no vertex in $B \setminus U$ that is distinct from $u$ can have more than one neighbor among $v_1,\ldots,v_k$. Since we are in Case~2, every vertex in $B\setminus U$ has more than one neighbor in $A\setminus U$, hence property (d) remains true.
Similarly as above,
let $n_d$ be the number of vertices in $A\cap U$ that are not incident with an edge in $M'$,
have a neighbor in $B\setminus U$,
but do not have a neighbor in $B\setminus U'$.
Note that $n_d\leq \Delta-k-1$,
$d'=d+k-n_d-1$, and
$f'=f+n_d$.
Since $\frac{(\Delta-1)^2}{\Delta-2}\Big((\Delta-2)-(k-1)\Big)\geq \Delta-k-1\geq n_d$,
property (\ref{p4}) is maintained.

Next, suppose that $u$ either has no neighbor in $A\cap U$ or some neighbor of $u$ in $A\cap U$ is incident with $M$.
Let $M'$ arise from $M$ by adding the edge $uv_1$.
Clearly, replacing $(U,M)$ with $(U',M')$, we again maintain properties (\ref{p0}) to (\ref{p3}), and $s'=s+1$. Note that, in the case where $u$ has a neighbor in $A \cap U$, $v_1$ does not have neighbors in $M$ because of property~(\ref{p2}), which guarantees that $M'$ is indeed a uniquely restricted matching.
Defining $n_d$ exactly as above, we obtain
$n_d\leq \Delta-k-1$. Indeed, if $u$ has no neighbor in $A \cap U$, then $n_d = 0$. On the other hand, if $u$ has a neighbor in $A\cap U$ that is incident with $M$, then  $n_d\leq \Delta-k-1$. As $k \leq \Delta -1$, in both cases it holds that $n_d\leq \Delta-k-1$. Also, we get that $d'=d + k -n_d + 1$ and $f'=f+n_d$, and the same calculation as above implies that property (\ref{p4}) is maintained.

\medskip

\noindent Since the considered cases exhaust all possibilities,
and in each case we described an extension that maintains the relevant properties,
the proof is complete up to the running time of the algorithm, which we proceed to analyze. One can easily check that each extension operation takes time $O(\Delta n)$, where $n = n(G)$. As in each extension operation, the size of $U$ is incremented by at least one, it follows that the overall running time of the algorithm is $O(\Delta n^2)$.\end{proof}

With Lemma \ref{lemmanew1} at hand, we proceed to our first approximation algorithm.

\begin{theorem}\label{theoremnew1}
Let $\Delta \geq 3$ be an integer. For a given connected $C_4$-free bipartite graph $G$ of maximum degree at most $\Delta$,
one can find in polynomial time a uniquely restricted matching $M$ of $G$ of size at least
$\frac{(\Delta-1)^2+(\Delta-2)}{(\Delta-1)^3+(\Delta-2)}\nu_{ur}(G).$
\end{theorem}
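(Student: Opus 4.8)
The plan is to reduce the general case of Theorem~\ref{theoremnew1} to the situation handled by Lemma~\ref{lemmanew1}, paying attention to two issues that the lemma's hypotheses rule out: vertices of degree at most $1$ in the partite set $A$, and the possibility that \emph{every} vertex in $B$ has degree exactly $\Delta$. The key observation is that the lemma gives a matching of size at least $c_\Delta|A|$ with $c_\Delta=\frac{(\Delta-1)^2+(\Delta-2)}{(\Delta-1)^3+(\Delta-2)}$, and since $G$ is bipartite we always have $\nu_{ur}(G)\le\min\{|A|,|B|\}\le|A|$. So a matching of size $c_\Delta|A|$ is automatically a $c_\Delta$-fraction of $\nu_{ur}(G)$, \emph{provided} we are free to name as "$A$" whichever partite set we like. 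Thus the first step is to choose the roles of $A$ and $B$ so that the lemma applies and so that the resulting bound $c_\Delta|A|$ dominates $c_\Delta\,\nu_{ur}(G)$.

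\medskip

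\noindent\textbf{Step 1 (handling low-degree vertices in $A$).} First I would dispose of vertices of degree $0$ and $1$. Degree-$0$ vertices can simply be deleted, as they affect neither $\nu_{ur}$ nor any matching. For a degree-$1$ vertex $a\in A$ with unique neighbour $b$, the edge $ab$ can always be safely committed: I would argue that a maximum uniquely restricted matching can be assumed to saturate such a pendant vertex, or alternatively peel off $ab$, add it to the solution, remove $a$ and $b$, and recurse on what remains. The subtlety is that removing $b$ may create new low-degree vertices and may disconnect the graph, so this peeling must be iterated and applied componentwise. The cleanest way to organize this is to preprocess $G$ into a graph $G'$ in which every $A$-vertex has degree at least $2$, accumulating a set of "forced" pendant edges that are pairwise non-adjacent and form a uniquely restricted matching, then apply the lemma to each nontrivial component of $G'$ and union everything. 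One must check that the forced edges and the lemma's matching remain jointly uniquely restricted (no alternating cycle can use a pendant edge), which is immediate since a pendant vertex cannot lie on any cycle.

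\medskip

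\noindent\textbf{Step 2 (ensuring a low-degree vertex in $B$).} The lemma also requires some vertex of $B$ to have degree strictly less than $\Delta$. If every vertex of $B$ has degree exactly $\Delta$, I would swap the roles of the two partite sets and instead ask whether every vertex of the \emph{other} side has degree $\Delta$. If both sides are entirely $\Delta$-regular then $G$ is $\Delta$-regular and bipartite, so $|A|=|B|$; in that case I would either invoke a separate (easy) direct argument for the regular case, or break regularity by the degree-$1$ reduction of Step~1 after a harmless modification. The natural move is to orient the choice of which set to call $A$ so that the \emph{other} set $B$ contains a sub-$\Delta$ vertex; after Step~1 has forced every $A$-vertex to degree $\ge 2$, such a choice exists unless the whole component is $\Delta$-regular.

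\medskip

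\noindent\textbf{The main obstacle} I expect is precisely the bookkeeping of the reductions: guaranteeing that after deleting pendant vertices and their neighbours the hypotheses of the lemma (minimum $A$-degree $\ge 2$, a sub-$\Delta$ vertex in $B$, $C_4$-freeness, connectedness) all persist, and that the size bound $c_\Delta|A|$ aggregates correctly across components and across forced edges to yield $c_\Delta\,\nu_{ur}(G)$ overall. The genuinely content-free but error-prone part is verifying that $|A|\ge\nu_{ur}(G)$ survives these manipulations, i.e. that the forced pendant edges plus $c_\Delta$ times the remaining $A$-side still lower-bounds $c_\Delta\,\nu_{ur}(G)$; this follows because each forced edge contributes $1\ge c_\Delta$ to both the solution and to the $\nu_{ur}$ upper bound, so the worst ratio is governed by the lemma's bound on the reduced graph. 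All reductions are clearly polynomial, so the running-time claim follows from the polynomiality asserted in Lemma~\ref{lemmanew1}.
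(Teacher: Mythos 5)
Your Step 1 and the aggregation argument (each forced pendant edge contributes $1 \ge \alpha$ to both the solution and to the upper bound on $\nu_{ur}$, so the ratio is governed by the reduced graph) coincide with the paper's treatment, as does the idea of naming as $B$ whichever partite set contains a vertex of degree less than $\Delta$. The genuine gap is your Step 2, in the case where the graph is $\Delta$-regular. There is no ``easy direct argument'' there, and ``breaking regularity by a harmless modification'' is not harmless: deleting a vertex $u$ only guarantees $\nu_{ur}(G-u)\ge\nu_{ur}(G)-1$, and this additive loss of $1$ destroys the multiplicative guarantee. Concretely, if $\nu_{ur}(G)=\nu(G)$ (a uniquely restricted perfect matching exists), then $\nu_{ur}(G-u)=\nu_{ur}(G)-1$ for \emph{every} vertex $u$, so trying all vertex deletions only certifies a matching of size $\left\lceil\alpha(\nu_{ur}(G)-1)\right\rceil$, which can be strictly smaller than $\left\lceil\alpha\,\nu_{ur}(G)\right\rceil$: for $\Delta=3$, $\alpha=5/9$, $\nu_{ur}(G)=10$, you certify $5$ but need $6$. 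Deleting an edge instead of a vertex does not work either, because uniquely-restrictedness is not preserved under edge deletion: a matching that is uniquely restricted in $G-e$ need not be uniquely restricted in $G$ (compare $P_4$ with $C_4$), whereas vertex deletion is safe precisely because an $M$-alternating cycle uses only vertices of $V(M)$ and $G-u$ is an induced subgraph.

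The paper closes the regular case with an external, nontrivial tool that your proposal lacks: by the result of Penso, Rautenbach, and Souza, one can decide in polynomial time whether $\nu_{ur}(G)=\nu(G)$, and in the affirmative case actually compute a maximum matching that is uniquely restricted -- an exact optimum, so a regular graph with $\nu_{ur}=\nu$ is never fed to the lemma at all. In the remaining case $\nu_{ur}(G)<\nu(G)=|A|$, some maximum uniquely restricted matching misses a vertex $u$, hence $\nu_{ur}(G-u)=\nu_{ur}(G)$ with no loss; since every component of $G-u$ contains a neighbor of $u$ and therefore a vertex of degree less than $\Delta$, each graph $G-u$ satisfies the lemma's hypotheses after your Step 1, and running the algorithm on all $n$ graphs $G-u$ and keeping the largest output achieves the stated ratio. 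Without this recognition step (or some substitute idea for $\Delta$-regular graphs admitting a uniquely restricted perfect matching), your reduction does not establish the theorem.
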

\begin{proof}
Let $\alpha=\frac{(\Delta-1)^2+(\Delta-2)}{(\Delta-1)^3+(\Delta-2)}$ and let ${\cal G}$ be the set of all $C_4$-free bipartite graphs $G$ of maximum degree at most $\Delta$
such that every component of $G$ has a vertex of degree less than $\Delta$.
First, we prove that, for every given graph $G$ in ${\cal G}$,
one can find in polynomial time a uniquely restricted matching $M$ of size at least $\alpha\nu_{ur}(G)$.
Therefore, let $G$ be in ${\cal G}$.

If $G$ has a vertex $u$ of degree $1$, and $v$ is the unique neighbor of $u$, then let $G'=G-\{ u,v\}$.
Clearly, $\nu_{ur}(G')\geq \nu_{ur}(G)-1$,
and if $M'$ is a uniquely restricted matching of $G'$,
then $M'\cup \{ uv\}$ is a uniquely restricted matching of $G$.
Note that $G'$ belongs to ${\cal G}$.
Let $G''$ be the graph obtained from $G'$ by removing every isolated vertex.
Clearly, $\nu_{ur}(G'')\geq \nu_{ur}(G')$,
if $M''$ is a uniquely restricted matching of $G''$,
then $M''$ is a uniquely restricted matching of $G'$, and
$G''$ belongs to ${\cal G}$.

Iteratively repeating these reductions,
we efficiently obtain a set $M_1$ of edges of $G$ as well as a subgraph $G_2$ of $G$ such that
$G_2\in {\cal G}$,
$\nu_{ur}(G_2)\geq \nu_{ur}(G)-|M_1|$,
$M_1\cup M_2$ is a uniquely restricted matching of $G$ for every uniquely restricted matching $M_2$ of $G_2$, and
either $n(G_2)=0$ or $\delta(G_2)\geq 2$.
Note that if $G$ has minimum degree at least $2$, then we may choose $M_1$ empty and $G_2$ equal to $G$.
Now, by suitably choosing the bipartition of each component $K$ of $G_2$,
and applying Lemma \ref{lemmanew1} to $K$,
one can determine in polynomial time a uniquely restricted matching $M_2$ of $G_2$
with $|M_2|\geq \alpha\nu_{ur}(G_2)$.
Since the set $M_1\cup M_2$ is a uniquely restricted matching of $G$ of size at least
$|M_1|+\alpha\nu_{ur}(G_2)\geq |M_1|+\alpha(\nu_{ur}(G)-|M_1|)\geq \alpha\nu_{ur}(G)$,
the proof of our claim about ${\cal G}$ is complete.

%

Now, let $G$ be a given connected $C_4$-free bipartite graph $G$ of maximum degree at most $\Delta$. If $G$ is not $\Delta$-regular, then $G\in {\cal G}$, and the desired statement already follows. Hence, we may assume that $G$ is $\Delta$-regular, which implies that its two partite sets $A$ and $B$ are of the same order. By \cite{peraso}, we can efficiently decide whether $\nu_{ur}(G)=\nu(G)$. Furthermore, if $\nu_{ur}(G)=\nu(G)$, then, again by \cite{peraso}, we can efficiently determine a maximum matching that is uniquely restricted. Hence, we may assume that $\nu_{ur}(G)<\nu(G)$. This implies that $\nu_{ur}(G)<|A|$, and, hence, there is some vertex $u \in V(G)$  with $\nu_{ur}(G-u)=\nu_{ur}(G)$. Since $G-u\in {\cal G}$ for every vertex $u$ of $G$, considering the $n(G)$ induced subgraphs $G-u$ for $u\in V(G)$, one can determine in polynomial time a uniquely restricted matching $M$ of $G$ with $|M|\geq \max\{\alpha \nu_{ur}(G-u):u\in V(G)\} = \alpha\nu_{ur}(G)$. The desired statement follows.\end{proof}

\section{A $9/5$-approximation for subcubic bipartite graphs}
\label{ap:Delta3}

We show that -- at least for $\Delta=3$ --
$C_4$-freeness is not an essential assumption. Namely, this section is devoted to proving the following theorem.

\begin{theorem}\label{th:approxDelta3}
For a given connected subcubic bipartite graph $G$,
one can find in polynomial time a uniquely restricted matching of $G$
of size at least $\frac{5}{9}\nu_{ur}(G)$.
\end{theorem}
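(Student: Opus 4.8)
The plan is to reduce the case $\Delta=3$ to the $C_4$-free situation already handled by Theorem~\ref{theoremnew1}, paying for the reduction by a careful treatment of the $4$-cycles. Since Theorem~\ref{theoremnew1} for $\Delta=3$ yields a ratio of $\frac{(\Delta-1)^2+(\Delta-2)}{(\Delta-1)^3+(\Delta-2)}=\frac{4+1}{8+1}=\frac{5}{9}$, the target constant is exactly the one produced by the $C_4$-free algorithm. So the entire burden is to show that $C_4$-freeness can be enforced, up to polynomial-time preprocessing, without degrading this ratio.

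\textbf{Key steps.}
First I would develop a preprocessing phase that repeatedly locates a $C_4$ in $G$ (or a suitable local configuration built around a $4$-cycle) and extracts from it a small set of edges to commit to the matching $M_1$, simultaneously deleting a correspondingly small vertex set so as to maintain subcubicity, bipartiteness, and connectivity of each remaining component. The accounting must guarantee, exactly as in the proof of Theorem~\ref{theoremnew1}, that $\nu_{ur}(G_2)\ge \nu_{ur}(G)-|M_1|$ and that $M_1$ together with any uniquely restricted matching of the reduced graph $G_2$ remains uniquely restricted in $G$; the latter requires checking that committing these edges cannot close an $M$-alternating cycle through the deleted part. I would organize this as a case analysis over the possible neighborhoods of a $4$-cycle in a subcubic bipartite graph, which is the ``detailed local analysis'' and ``complicated preprocessing'' announced in the introduction. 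Each local rule should remove at least one vertex, so the phase terminates in polynomially many steps, and each step should contribute enough matching edges to preserve a $\frac{5}{9}$ ratio on the part it consumes.

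\textbf{Assembling the ratio.}
Once the preprocessing produces a $C_4$-free residual graph $G_2$ of maximum degree at most $3$, I would invoke Theorem~\ref{theoremnew1} on $G_2$ to obtain a uniquely restricted matching $M_2$ with $|M_2|\ge \tfrac{5}{9}\,\nu_{ur}(G_2)$, handle any residual isolated or degree-one vertices by the same peeling reductions used in Theorem~\ref{theoremnew1}, and output $M_1\cup M_2$. The final bound then follows from the now-familiar telescoping estimate
\[
|M_1\cup M_2|\;\ge\;|M_1|+\tfrac{5}{9}\,\nu_{ur}(G_2)\;\ge\;|M_1|+\tfrac{5}{9}\big(\nu_{ur}(G)-|M_1|\big)\;\ge\;\tfrac{5}{9}\,\nu_{ur}(G),
\]
using $\tfrac{5}{9}\le 1$ so that the committed edges are never penalized. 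The degenerate $3$-regular case, where no vertex of degree less than $3$ exists to seed Lemma~\ref{lemmanew1}, should be dispatched exactly as in Theorem~\ref{theoremnew1}: decide $\nu_{ur}(G)=\nu(G)$ via~\cite{peraso}, and otherwise delete a vertex $u$ with $\nu_{ur}(G-u)=\nu_{ur}(G)$ before preprocessing.

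\textbf{Main obstacle.}
The hard part will be the local analysis of the $4$-cycles. Unlike the clean greedy extension in Lemma~\ref{lemmanew1}, here two $4$-cycles may share vertices or edges, so the deletion rules must be chosen to avoid double-counting and, crucially, to ensure that the residual graph is genuinely $C_4$-free rather than merely having one $4$-cycle removed. I expect that making every local rule simultaneously (i) destroy at least one $4$-cycle, (ii) keep the per-step matching-to-deletion ratio at least $\frac{5}{9}$, and (iii) preserve the invariant that committed edges never create an alternating cycle in $G$, will require enumerating the overlapping patterns and verifying each by hand, which is why the authors describe this step as requiring complicated preprocessing based on detailed local analysis.
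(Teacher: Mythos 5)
Your high-level architecture (preprocess, run the Lemma~\ref{lemmanew1}-style greedy on the residue, telescope via $|M_1|+\alpha(\nu_{ur}(G)-|M_1|)\geq\alpha\nu_{ur}(G)$, and dispatch the $3$-regular case through~\cite{peraso}) matches the paper, and you correctly identify that $\frac{5}{9}$ is the $\Delta=3$ value of the $C_4$-free ratio. However, the core of your plan --- preprocessing until the residual graph is \emph{genuinely $C_4$-free} and then invoking Theorem~\ref{theoremnew1} --- has a gap that the paper deliberately avoids. The telescoping estimate needs $\nu_{ur}(G_2)\geq\nu_{ur}(G)-|M_1|$, i.e.\ each committed edge must pay for one unit of lost $\nu_{ur}$. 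Consider a $C_4$ on $a_1b_1a_2b_2$ all four of whose vertices have a neighbor outside the cycle (this is generic in a subcubic graph). An optimal uniquely restricted matching may contain four edges incident to this $C_4$, all leaving it, so deleting $V(C_4)$ can cost up to $4$; but you can commit at most \emph{one} edge of the $C_4$ to $M_1$ (two disjoint $C_4$-edges already form an alternating cycle, and committed edges must have both endpoints deleted to stay disjoint from $M_2$). So the required inequality fails, and enlarging the deleted region does not obviously help, since the boundary of the enlarged region can stay just as large. The only configurations for which this per-step accounting closes are exactly those where all but a few designated vertices have no neighbors outside the pattern --- and these are precisely the paper's six forbidden patterns $R.1$--$R.6$, each committing $1$--$3$ red edges that pay exactly for the $1$--$3$ boundary vertices.

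Accordingly, the paper's preprocessing removes \emph{only} those six patterns (plus twins and low-degree vertices); the residual graph $G_2$ is \emph{not} $C_4$-free. The remaining $C_4$'s are handled inside the main algorithm (Lemma~\ref{lemmanewDelta}), a modified version of Lemma~\ref{lemmanew1} that maintains a triple $(U,M,\gamma)$ with vertex labels $\select$, $\drain$, $\forg$ and the \emph{amortized} invariant $4\bigl(s-(d+f)\bigr)\geq f$, together with an extra invariant (property~(h)) controlling how $\drain$ labels interact with $C_4$'s still outside $U$. The distinction matters: the amortized potential allows some extension steps to be locally ``bad'' (e.g.\ converting $\drain$ vertices to $\forg$) as long as the global inequality survives, whereas your per-step ratio requirement cannot be met at every $C_4$. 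So the missing idea is not a finer enumeration of overlapping $4$-cycles, but the realization that $C_4$-freeness cannot be bought by edge-committing reductions at all, and that the $C_4$'s must instead be absorbed into the greedy extension process under a potential-function accounting.
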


We believe that Theorem \ref{th:approxDelta3} extends to larger maximum degrees,
that is, the conclusion of Theorem \ref{theoremnew1} should hold without the assumption of $C_4$-freeness.


\medskip

In order to ease the presentation,  in this section we will use figures to describe some of the ``patterns'' considered by the algorithms. More formally, given a graph $G$, a \emph{pattern} $P$ is a subgraph of $G$ in which the set of vertices that have neighbors in $V(G) \setminus V(P)$ is fixed.


In all these figures, the partition of the corresponding bipartite subcubic graph into two sets $A$ and $B$ is represented by using squares and circles, respectively. The half-edges specify which vertices in a pattern have neighbors outside of it.




The following lemma is crucial in order to prove Theorem~\ref{th:approxDelta3}; it plays a role similar to the one played by Lemma~\ref{lemmanew1} for proving Theorem~\ref{theoremnew1}.

\hspace{1cm}
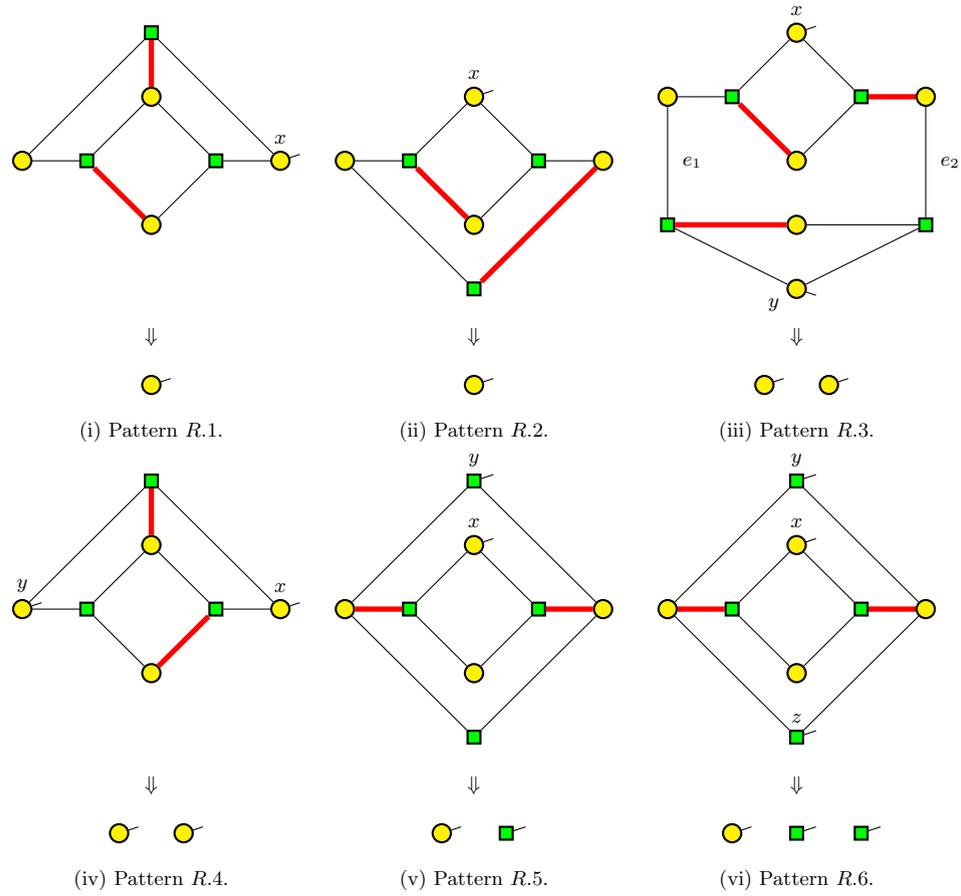
\begin{figure}[h]
\centering
  \begin{tikzpicture}[scale=0.85, every node/.style={transform shape}]
    \def\x{0}
    \def\y{0}
    \pc{0,0}{}

    \pa{1,1}{a1}
    \pa{1,-1}{a2}
    \pb{2,0}{b1}
    \paa{3,0}{a3}{label=$x$}
    \pa{-1,0}{a4}

    \pb{1,2}{b2}
    \draw (c) -- (a1);
    \draw (c) -- (a2) [select];
    \draw (c) -- (a4);
    \draw (b1) -- (a1);
    \draw (b1) -- (a2);
    \draw (b1) -- (a3);
    \draw (b2) -- (a1) [select];
    \draw (b2) -- (a4);
    \draw (b2) -- (a3);
    \outh{a3}
    \capt{0,-3}{$\Downarrow$};

    \pa{1,-3.5}{a5}
    \outh{a5}

    \capt{0+\x, -4.5+\y}{(i) Pattern $R.1$.};

    \def\x{5}
    \def\y{0}
    \pc{0,0}{}

    \paa{1,1}{a1}{label=$x$}
    \pa{1,-1}{a2}
    \pb{2,0}{b1}
    \pa{3,0}{a3}
    \pa{-1,0}{a4}

    \pb{1,-2}{b3}
    \draw (c) -- (a1);
    \draw (c) -- (a2) [select];
    \draw (c) -- (a4);
    \draw (b1) -- (a1);
    \draw (b1) -- (a2);
    \draw (b1) -- (a3);

    \draw (b3) -- (a4);
    \draw (b3) -- (a3) [select];

    \outh{a1}
    \capt{0+\x,-3+\y}{$\Downarrow$};

    \pa{1,-3.5}{a5}
    \outh{a5}

    \capt{0+\x, -4.5+\y}{(ii) Pattern $R.2$.};

    \def\x{10}
    \def\y{1 }
    \pc{0,0}{}

    \paa{1,1}{a1}{label=$x$}
    \pa{1,-1}{a2}
    \pb{2,0}{b1}
    \pa{3,0}{a3}
    \pa{-1,0}{a4}
    \pb{-1,-2}{b2}
    \pb{3,-2}{b3}
    \pa{1,-2}{a5}
    \paa{1,-3}{a6}{label=190:$y$}

    \draw (c) -- (a1);
    \draw (c) -- (a2) [select];
    \draw (c) -- (a4);
    \draw (b1) -- (a1);
    \draw (b1) -- (a2);
    \draw (b1) -- (a3) [select];
    \draw (a4) -- (b2) node [midway,label=0:$e_1$] {};
    \draw (a3) -- (b3) node [midway,label=0:$e_2$] {};
    \draw (b3) -- (a5);
    \draw (b3) -- (a6);
    \draw (b2) -- (a5) [select];
    \draw (b2) -- (a6);

    \outh{a1}
    \outb{a6}

    \capt{0+\x,-4+\y}{$\Downarrow$};

    \pa{0.5,-4.5}{a7}
    \pa{1.5,-4.5}{a8}
    \outh{a8}
    \outh{a7}
    \capt{0+\x, -5.5+\y}{(iii) Pattern $R.3$.};

    \def\x{0}
    \def\y{-7}
    \pc{0,0}{}

    \pa{1,1}{a1}
    \pa{1,-1}{a2}
    \pb{2,0}{b1}
    \paa{3,0}{a3}{label=$x$}
    \paa{-1,0}{a4}{label=$y$}

    \pb{1,2}{b2}
    \draw (c) -- (a1);
    \draw (c) -- (a2);
    \draw (c) -- (a4);
    \draw (b1) -- (a1);
    \draw (b1) -- (a2) [select];
    \draw (b1) -- (a3);
    \draw (b2) -- (a1) [select];
    \draw (b2) -- (a4);
    \draw (b2) -- (a3);
    \outh{a4}
    \outh{a3}

    \capt{0+\x,-3+\y}{$\Downarrow$};

    \pa{0.5,-3.5}{a7}
    \pa{1.5,-3.5}{a8}
    \outh{a8}
    \outh{a7}
    \capt{0+\x, -4.5+\y}{(iv) Pattern $R.4$.};

    \def\x{5}
    \def\y{-7}
    \pc{0,0}{}

    \paa{1,1}{a1}{label=$x$}
    \pa{1,-1}{a2}
    \pb{2,0}{b1}
    \pa{3,0}{a3}
    \pa{-1,0}{a4}

    \pbb{1,2}{b2}{label=$y$}
    \pb{1,-2}{b3}
    \draw (c) -- (a1);
    \draw (c) -- (a2);
    \draw (c) -- (a4) [select];
    \draw (b1) -- (a1);
    \draw (b1) -- (a2);
    \draw (b1) -- (a3) [select];
    \draw (b2) -- (a4);
    \draw (b2) -- (a3);

    \draw (b3) -- (a4);
    \draw (b3) -- (a3);

    \outh{a1}
    \outh{b2}

    \capt{0+\x,-3+\y}{$\Downarrow$};

    \pa{0.5,-3.5}{a7}
    \pb{1.5,-3.5}{a8}
    \outh{a8}
    \outh{a7}
    \capt{0+\x, -4.5+\y}{(v) Pattern $R.5$.};

    \def\x{10}
    \def\y{-7}
    \pc{0,0}{}

    \paa{1,1}{a1}{label=$x$}
    \pa{1,-1}{a2}
    \pb{2,0}{b1}
    \pa{3,0}{a3}
    \pa{-1,0}{a4}

    \pbb{1,2}{b2}{label=$y$}
    \pbb{1,-2}{b3}{label=$z$}
    \draw (c) -- (a1);
    \draw (c) -- (a2);
    \draw (c) -- (a4) [select];
    \draw (b1) -- (a1);
    \draw (b1) -- (a2);
    \draw (b1) -- (a3) [select];
    \draw (b2) -- (a4);
    \draw (b2) -- (a3);

    \draw (b3) -- (a4);
    \draw (b3) -- (a3);

    \outh{a1}
    \outh{b2}
    \outh{b3}

    \capt{0+\x,-3+\y}{$\Downarrow$};

    \pa{0,-3.5}{a7}
    \pb{1,-3.5}{a8}
    \pb{2,-3.5}{a9}
    \outh{a8}
    \outh{a7}
    \outh{a9}
    \capt{0+\x, -4.5+\y}{(vi) Pattern $R.6$.};

  \end{tikzpicture}

  \caption{The six forbidden patterns.
}
  \label{fig:redpat}
\end{figure}

\newpage
\begin{lemma}
  \label{lemmanewDelta}
  If $G$ is a connected subcubic bipartite graph with partite sets $A$ and $B$
  such that
  \begin{enumerate}[(1)]
  \item\label{c1} $G$ does not any of the patterns $R.1$, $R.2$, $R.3$, $R.4$, $R.5$, or $R.6$ depicted in Fig.~\ref{fig:redpat},
  \item\label{c2} $G$ does not contain two vertices with the same neighborhood,
  \item\label{c4}  each vertex of $G$ has degree at least $2$, and
  \item \label{c3} some vertex in $B$ has degree at most $2$,
  \end{enumerate}
  then a uniquely restricted matching $M$ of $G$
  of size at least
  $\frac{5}{9} \nu_{ur}(G)$ can be found in polynomial time.
\end{lemma}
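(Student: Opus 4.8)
The plan is to mimic the incremental strategy from the proof of Lemma~\ref{lemmanew1}: I will build a uniquely restricted matching $M$ together with a growing ``explored'' set $U \subseteq V(G)$ by a sequence of local extension steps, while maintaining a potential-type invariant that guarantees a good ratio between the accumulated matching edges and the explored vertices. Concretely, I would maintain a pair $(U,M)$ satisfying analogues of properties (\ref{p0})--(\ref{p4}): $U \subseteq V(G)$, $M$ is uniquely restricted with $V(M) \subseteq U$, no vertex in $B \cap U$ has a neighbor in $A \setminus U$, no vertex in $B \setminus U$ has all its neighbors in $A \cap U$, and a numerical inequality relating the counts $s$, $d$, and $f$ (vertices of $A \cap U$ that are matched, unmatched but with an outside neighbor, or unmatched without one) that is tight enough to force $|M| \ge \frac59 |A|$ at termination. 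Because $\Delta = 3$, I would first calibrate the precise linear inequality so that, upon $U = V(G)$ (where $d = 0$, $s = |M|$, $f = |A|-|M|$), it collapses to $|M| \ge \frac59|A|$; since $|A| \ge \nu_{ur}(G)$ for a bipartite graph with smaller side $A$, the claimed ratio against $\nu_{ur}(G)$ then follows, exactly as in Theorem~\ref{theoremnew1}.

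\textbf{The extension steps.}
At each iteration I would, as before, pick a vertex $u \in B \setminus U$ minimizing $d_{\bar U}(u) = |N_G(u) \setminus U|$, which by property~(\ref{c3}) and connectivity lies in $\{1,2\}$ (the degree-$3$ case being eventually absorbed by the choice of $u$), and then grow $U$ to swallow $u$, its outside neighbors, and all $B$-vertices whose outside neighborhood is contained in the newly added $A$-vertices. The key difference from the $C_4$-free setting is that, without forbidding $C_4$, the clean ``one new matching edge per step'' accounting can fail: two vertices of $B\setminus U$ may share two common neighbors, so adding a matching edge naively could either create an $M$-alternating $4$-cycle (destroying unique restrictedness) or waste potential. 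This is precisely what the six forbidden patterns $R.1$--$R.6$ and conditions~(\ref{c1})--(\ref{c2}) are engineered to exclude: condition~(\ref{c2}) removes twins (the degenerate $C_4$ with identical neighborhoods), and the patterns $R.1$--$R.6$ rule out exactly the local configurations in which the natural extension would either fail to maintain unique restrictedness or fail to maintain the potential inequality. So for each value of $d_{\bar U}(u)$ and each local type of $u$ (how many of its $U$-neighbors are already matched, whether $C_4$'s are present among the newly enclosed vertices), I would exhibit a concrete matching edge (or small set of edges) to add, verify that no $M'$-alternating cycle is created using the absence of the relevant pattern, and check that the updated $(s',d',f')$ still satisfy the calibrated inequality.

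\textbf{The main obstacle.}
The hard part will be the case analysis forced by the presence of $C_4$'s. In Lemma~\ref{lemmanew1}, $C_4$-freeness guaranteed that distinct $B$-vertices attached to the same $A$-vertex had distinct other neighbors, so matching edges never collided and alternating $4$-cycles never arose; here I must instead argue, pattern by pattern, that the forbidden configurations are the \emph{only} obstructions, and that whenever none is present some valid extension exists that both preserves unique restrictedness and pays for the explored vertices at the $\frac59$ rate. I expect each pattern in Fig.~\ref{fig:redpat} to correspond to one ``bad'' branch of the analysis: the pattern encodes a local subgraph where the greedy extension would be forced to either skip a matching edge or enclose too many $f$-type vertices, and its exclusion precisely rescues the inequality. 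The bookkeeping is therefore the crux---I would organize it by the minimized degree $d_{\bar U}(u)$ and subdivide by the matched/unmatched status of the $U$-neighbors and the $C_4$-structure among the enclosed vertices, discharging each subcase either by a direct extension or by invoking the absence of the corresponding $R.i$. Finally, as in Lemma~\ref{lemmanew1}, each step enlarges $U$ by at least one vertex and runs in time polynomial in $n(G)$, so the whole procedure is polynomial, completing the proof.
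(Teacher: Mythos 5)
Your skeleton is the same as the paper's (grow an explored set with a potential invariant; at termination $d=0$ forces $|M|\geq\frac59|A|\geq\frac59\nu_{ur}(G)$; discharge the $C_4$ complications case by case using conditions (1) and (2)), but there is a genuine gap in the state you propose to maintain. You keep only a pair $(U,M)$ with the counts $s,d,f$ \emph{derived} from it, as in Lemma~\ref{lemmanew1}. That bookkeeping provably cannot support the $C_4$ analysis, and the paper's proof is built around exactly this point: it maintains a \emph{triple} $(U,M,\gamma)$, where $\gamma:U\cap A\to\{\select,\drain,\forg\}$ is an explicit labeling in which $\drain$ is merely \emph{permitted} (not forced) for unmatched vertices with a neighbor in $B\sm U$ (property (\ref{qd})), together with the non-local invariant (\ref{qh}): no vertex labeled $\drain$ may be adjacent to any $C_4^1$ still lying in $G\sm U$. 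Here is the concrete failure mode your version runs into. When a $C_4^1$ (with $A$-vertices of degrees $3$ and $2$) is eventually swallowed, only one new matching edge can be gained from it (matching both $A$-vertices inside it creates an alternating $4$-cycle), and its two $B$-vertices may have third neighbors in $A\cap U$. Any such neighbor that was being counted as a $d$-vertex loses its last outside neighbor and must be demoted to an $f$-vertex; with the calibrated invariant $4(s-(d+f))\geq f$, a step with $s'=s+1$ and one demotion leaves the left-hand side unchanged while $f$ increases, so the invariant breaks when tight. With derived counts this demotion is \emph{forced} by adjacency; the only cure is to have proactively labeled such vertices $\forg$ earlier despite their outside neighbors, which requires $\gamma$ to be genuine state, and an invariant like (\ref{qh}) guaranteeing that this foresight was exercised with respect to every $C_4^1$ not yet resolved. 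Your proposal contains no mechanism of this kind, and its absence is not a detail one can "check" locally in each branch --- it is a constraint coupling the current step to pattern resolutions many steps in the future.

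Two further ingredients of the paper's proof are missing and are not cosmetic. First, the case ordering: the paper eliminates all $C_4^1$'s (Case 2) and then all $C_4^2$'s (Case 3) as higher-priority cases than the degree-$2$ extension (Case 4), so that the degree-$2$ step may assume $G\sm U$ is $C_4$-free and so that invariant (\ref{qh}) stays maintainable; your organization (by $d_{\bar U}(u)$ first, with $C_4$'s as subcases) would have to re-derive these interactions inside every branch. Second, a brute-force base case ($n(G)\leq 10$ in the paper) is needed because several of the completed configurations in the analysis have no edges leaving them and are then the entire graph. Finally, note that the forbidden patterns are not used quite as you describe ("the only obstructions to the greedy step"): inside the lemma they are invoked to assert that certain fully-described configurations simply cannot occur (Patterns (iv), (v), (vi), (ix) of Fig.~\ref{fig:case2_3}, Pattern (xviii) of Fig.~\ref{fig:case2_5}, Pattern (iii) of Fig.~\ref{fig:case3_3}), while the remaining configurations must each be resolved explicitly. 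As written, your proposal correctly locates the crux but defers it entirely, and the natural implementation of what you wrote would fail at the demotion step above.
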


\begin{proof}
If $n(G) \leq 10$, then we solve the problem optimally by brute force (we will see that the largest pattern without neighbors outside of it considered in the proof has 10 vertices).
  Therefore, we assume henceforth that $G$ contains at least $11$ vertices.
  In the following, we look for a uniquely restricted matching $M$  of size at least $\frac{5}{9} |A|$.
  As $|A| \geq \nu_{ur}(G)$, this implies the desired result.
  We define two types of $C_4$, namely $C_4^1$ and $C_4^2$, as follows.
  A $C_4^1$ is a subgraph of $G$ isomorphic to a  $C_4$ such that if $V(C_4^1) \cap A = \{a_1,a_2\}$, then $d_G(a_1) = 3$ and $d_G(a_2) = 2$.
  A $C_4^2$ is a subgraph of $G$ isomorphic to a  $C_4$ such that if $V(C_4^2) \cap A = \{a_1,a_2\}$, then $d_G(a_1) = d_G(a_2) = 3$.
  Note that because of condition~(\ref{c2}), there is no subgraph $G'$ of $G$ isomorphic to $C_4$ such that if $V(G') \cap A = \{a_1,a_2\}$, then $d_G(a_1)  = d_G(a_2) = 2$, as it implies that $a_1$ and $a_2$ have the same neighborhood.

  Similarly to the proof of Lemma~\ref{lemmanew1}, throughout the execution of our algorithm we maintain a triple $(U,M, \gamma)$ that respects the following properties:

  \begin{enumerate}[(a)]
  \item\label{qa} $U \subseteq V(G)$,
  \item\label{qb} $M$ is a uniquely restricted matching of $G$ such that $V(M) \subseteq U$,
  \item\label{qc} $\gamma: U \cap A \rightarrow \{\select, \drain, \forg\}$,
  \item\label{qd} for every $v \in A \cap U$, $\gamma(v) = \select$ if and only if $v \in V(M)$  and $\gamma(v) = \drain$ only if that $v$ has at least one neighbor in $B \sm U$,
  \item\label{qe} there is no edge between vertices in $U \cap B$ and vertices in $A \sm U$,
  \item\label{qf} every vertex in $B \sm U$ has at least one neighbor in $A \sm U$,
  \item\label{qg} if $s = |\{v \in A : \gamma(v) = \select\}|$, $d = |\{v \in A : \gamma(v) = \drain\}|$, and $f = |\{v \in A : \gamma(v) = \forg\}|$, then
    \begin{equation}
      \label{eq:delta}
      4\left(s-(d+f)\right) \geq f, \ \ \mathrm{ and}
    \end{equation}
  \item \label{qh} for each $C^1_4$ $G'$ in $G \sm U$, there is no vertex $v$ in $N_G(V(G')) \cap U \cap A$ such that $\gamma(v) = \drain$.
  \end{enumerate}

  \medskip

  We initialize the algorithm with $U = \emptyset$, $M = \emptyset$, and $\gamma = \varnothing$.
  Note that properties~(\ref{qa}) to (\ref{qh}) are satisfied.

  In the first part of the algorithm, we focus on removing the $C_4$'s from $G \sm U$.
  For this we first take care the $C_4^1$'s in $G \sm U$, and then we deal with the $C_4^2$'s.

  As long as  $G \sm U$ is not empty, we consider the first of the following cases such that the corresponding condition is fulfilled, where $d_{\bar{U}}(u)=|N_G(u)\setminus U|$:

  \begin{itemize}
  \item[$\bullet$]{Case 1: there exists $u \in B \sm U$ such that $d_{\bar{U}}(u) = 1$.}
  \item[$\bullet$] {Case 2: there exists $G'$, a $C_4^1$ in $G \sm U$.}
  \item[$\bullet$] {Case 3: there exists  $G'$, a $C_4^2$ in $G \sm U$.}
  \item[$\bullet$] {Case 4:  there exists $u \in B \sm U$ such that $d_{\bar{U}}(u) = 2$.}
  \end{itemize}

  Note that, by property~(\ref{qe}) and the connectivity of $G$, we know that, as long as $G \sm U$ is not the empty graph, at least one of these four cases should apply.
  We study each case and show that for each of them, we can find a new triple $(U',M', \gamma')$ starting from $(U,M,\gamma)$ such that
  $U$ is a proper subset of $U'$,
  $M$ is a proper subset of $M'$,
  $\gamma$ is the restriction of $\gamma'$ to $U$, and
  properties (\ref{qa}) to (\ref{qh}) hold for $(U',M',\gamma')$,
  where $s'$, $d'$, and $f'$ denote the updated values considered in (\ref{qg}). Once $U=V(G)$, we have $s=|M|$, $d=0$, and $f=|A|-|M|$,
  and (\ref{eq:delta})
  implies that $|M| \geq     \frac{5}{9} |A|$.

  In order to prove that such a triple can indeed be found in polynomial time, we distinguish four cases.

  In the following, by \emph{resolving} a pattern $P$ we mean that from a triple $(U,M,\gamma)$ that respects  properties~(\ref{qa}) to (\ref{qh}) such that $U \cap V(P) = \emptyset$, we exhibit a triple $(U',M',\gamma')$ that also respects properties~(\ref{qa}) to (\ref{qh}) and such that $U' = U \cup V(P)$.


  \paragraph{ {\bf Case 1:}} there exists $u \in B \sm U$ such that $d_{\bar{U}}(u) = 1$.\smallskip

  \noindent Assume that $d_{\bar{U}}(u) = 1$ and let $v$ be the only neighbor of $u$ in $A\sm U$.
  Let $\{ u_i : i \in [k]\}$ be the set of all vertices $u$ in $B\setminus U$ with $N_G(u)\setminus U=\{ v\}$.
  Note that $1 \leq k \leq 3$.
  Let $U'=U\cup \{v\} \cup \{u_i : i \in [k]\}$.
  By construction of $G$, we know that every vertex $u_i$, $i \in [k]$, is of degree at least $2$ in $G$, so it has at least one neighbor in $A \cap U$.
  For every $i \in [k]$, we define $W_i = N_G(u_i) \cap U$.
  Note that for $i,j \in [k]$ with $i \not = j$, $W_i$ and $W_j$ can intersect.
  Let $n_d$ be the number $|\{w \in \bigcup_{i \in [k]} W_i : \gamma(w) = \drain\}|$.
  Note that for each $u_i$, $i \in [k]$, $v \in N_G(u_i)$ and $v \not \in U$.
  This implies that for each $i \in [k]$, $|W_i| \leq 2$, and so, $n_d \leq 6$.

  First, assume that $n_d \leq 4$.
  Let $M'$ arise from $M$ by adding the edge $u_1v$.
  Let $\gamma'$ be obtained from $\gamma$ where $\gamma'(v) = \select$, and for each  $w \in \bigcup_{ i \in [k]} W_i$, such that $\gamma(w) = \drain$, then $\gamma'(w) = \forg$.
  Clearly, replacing $(U,M,\gamma)$ with $(U',M',\gamma')$, we maintain properties (\ref{qa}) to (\ref{qf}).
  By construction of $\gamma'$, we have that $s' = s+1$, $d' = d - n_d$, and $f' = f + n_d$.
  As $n_d \leq 4$, property (\ref{qg}) is maintained.
  Note that $\gamma'^{-1}(\drain) \subseteq\gamma^{-1}(\drain)$.
  This implies that property~(\ref{qh}) is maintained.

  Assume now that $n_d \geq 5$.
  This implies that $k = 3$, and two sets of $\{W_i: i \in [k]\}$, say $W_1$ and $W_2$, are such that
  $\{w \in W_1 \cup W_2 : \gamma(w)  \not = \drain\} = \emptyset$ and $|W_1| = |W_2| = 2$.
  Because of condition~(\ref{c2}), we know that we can find $w_1 \in W_1 \sm W_2$ and $w_2 \in W_2 \sm W_1$.
  Let $M'$ arise from $M$ by adding the edges $u_1w_1$ and $u_2w_2$.
  Let $\gamma'$ be obtained from $\gamma$ where $\gamma'(v) = \forg$, for each $w \in (\bigcup_{i \in [k]} W_i) \sm \{w_1,w_2\}$, such that $\gamma(w) = \drain$, then $\gamma'(w) = \forg$ and $\gamma'(w_1) = \gamma'(w_2) = \select$.
  Note that $M'$ is a uniquely restricted matching.
  Clearly, replacing $(U,M,\gamma)$ with $(U',M',\gamma')$, we maintain properties (\ref{qa}) to (\ref{qf}).
  Then by construction of $\gamma'$ we obtain that $s' = s + 2$, $d' = d - n_d$, and $f' = f + n_d -2 +1 $.
  We obtain that $s'-(d'+f') = s  - (d+f) +3$.
  As, $n_d \leq 6$,
  property~(\ref{qg}) is maintained.
  Note that $\gamma'^{-1}(\drain) \subseteq\gamma^{-1}(\drain)$.
  This implies that property~(\ref{qh}) is maintained.



  \paragraph{ {\bf Case 2:}} there exists a $C_4^1$ $G'$ in $G \sm U$.\smallskip

  \noindent We assume in this case that for every $u \in B \sm U$, $d_{\bar{U}}(u) \geq 2$.
  Let $V(G') \cap A = \{a_1,a_2\}$ such that $d_G(a_1) = 3$ and $V(G') \cap B = \{b_1,b_2\}$.
  For this case, when we say that  we define $M'$ and $\gamma'$ by updating the values of $M$ and $\gamma$ according to a figure, it means that
  we add to $M$ every red edge of the figure and for every $v$ of $A$ that is depicted in the figure, then $\gamma'(v) = \select$ if $v$ is the endpoint of a red edge, and $\gamma'(v) = \forg$ otherwise.

  First, assume that $N_G(b_1)\sm U = N_G(b_2)\sm U = \{a_1,a_2\}$.
  If the only vertex of $N_G(a_1) \sm V(G')$ is inside another $C_4$, then we are in the situation depicted in Fig.~\ref{fig:case2_1}(ii) and we define $U'$ to be the union of $U$ and the vertices of the two $C_4$'s, and we define $M'$ and $\gamma'$ by updating the values of $M$ and $\gamma$ according to Fig.~\ref{fig:case2_1}(ii).
  Otherwise, we are in the situation depicted in Fig.~\ref{fig:case2_1}(i), we define $U'$ to be $U \cup V(G')$, and we define $M'$ and $\gamma'$ by updating the values of $M$ and $\gamma$ according to Fig.~\ref{fig:case2_1}(i).
  In both cases, one can see that properties (\ref{qa}) to (\ref{qh}) are maintained.
  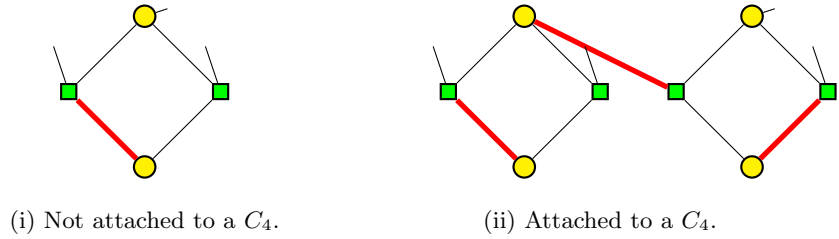
\begin{figure}[h]
   \centering
    \begin{tikzpicture}[scale=1, every node/.style={transform shape}]
      \def\x{0}
      \def\y{2}
      \pc{0,0}{}

      \pa{1,1}{a1}
      \pa{1,-1}{a2}
      \pb{2,0}{b1}
      \draw (c) -- (a1);
      \draw (c) -- (a2) [select];
      \draw (b1) -- (a1);
      \draw (b1) -- (a2);

      \outh{a1}
      \outtt{c}
      \outtt{b1}

      \capt{\x,-2+\y}{(i) Not attached to a $C_4$.};

      \def\x{5}
      \def\y{2}
      \pc{0,0}{}

      \pa{1,1}{a1}
      \pa{1,-1}{a2}
      \pb{2,0}{b1}
      \draw (c) -- (a1);
      \draw (c) -- (a2) [select];
      \draw (b1) -- (a1);
      \draw (b1) -- (a2);

      \def\x{8}
      \def\y{2}

      \pb{0,0}{ac}
      \pa{1,1}{aa1}
      \pa{1,-1}{aa2}
      \pb{2,0}{ab1}
      \draw (ac) -- (aa1);
      \draw (ac) -- (aa2);
      \draw (ab1) -- (aa1);
      \draw (ab1) -- (aa2) [select];

      \draw (a1) -- (ac) [select];

      \outh{aa1}
      \outtt{c}
      \outtt{b1}
      \outtt{ab1}
      \capt{-2+\x,-2+\y}{(ii) Attached to a $C_4$.};

    \end{tikzpicture}

    \caption{First case of $C_4^1$.}
    \label{fig:case2_1}
  \end{figure}

  Secondly, assume that there exists $a_3 \in A  \sm U$ such that $a_3 \in N_G(b_2)$ and $a_3 \not \in N_G(b_1)$.
  If there exists $b_3$ with neighbors only in $U \cup V(G) \cup \{a_3\}$, then we define $U'$ to be the union of $U$ and $\{a_1,a_2,a_3,b_1,b_2,b_3\}$, and we define $M'$ and $\gamma'$ by updating the values of $M$ and $\gamma$ according to Fig.~\ref{fig:case2_2}(ii).
  Otherwise, we are in the situation depicted in Fig.~\ref{fig:case2_2}(i), we define $U'$ to be $U \cup V(G') \cup \{a_3\}$, and we define $M'$ and $\gamma'$ by updating the values of $M$ and $\gamma$ according to Fig.~\ref{fig:case2_2}(i).
  In both cases we can see that properties (\ref{qa}) to (\ref{qh}) are maintained.
  \begin{figure}[h]
    \centering
    \begin{tikzpicture}[scale=1, every node/.style={transform shape}]
      \def\x{0}
      \def\y{2}
      \pc{0,0}{}

      \pa{1,1}{a1}
      \pa{1,-1}{a2}
      \pb{2,0}{b1}
      \pa{3,0}{a3}
      \draw (c) -- (a1);
      \draw (c) -- (a2) [select];
      \draw (b1) -- (a1);
      \draw (b1) -- (a2);
      \draw (b1) -- (a3) [select];

      \outh{a1}
      \outh{a3}
      \outtt{c}

      \capt{0.5+\x,-2+\y}{(i) If $b_3$ does not exist.};

      \def\x{6}
      \def\y{2}
      \pc{0,0}{}

      \pa{1,1}{a1}
      \pa{1,-1}{a2}
      \pb{2,0}{b1}
      \pa{3,0}{a3}
      \pb{4,1}{b2}

      \draw (c) -- (a1);
      \draw (c) -- (a2) [select];
      \draw (b1) -- (a1);
      \draw (b1) -- (a2);
      \draw (b1) -- (a3) [select];

      \draw (b2) -- (a1);
      \draw (b2) -- (a3);

      \outtt{c}

      \capt{0.5+\x,-2+\y}{(ii) If $b_3$ exists.};

    \end{tikzpicture}

    \caption{Second case of $C_4^1$.}
    \label{fig:case2_2}
  \end{figure}
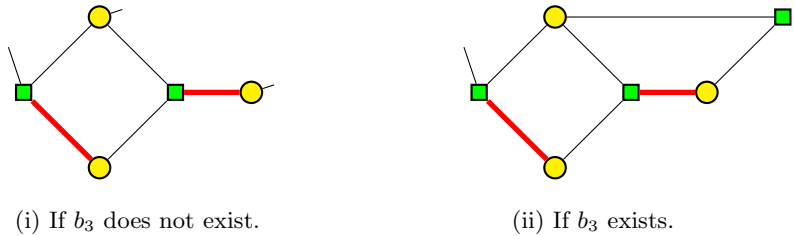

  \begin{figure}[H]
    \centering
    \begin{tikzpicture}[scale=0.9, every node/.style={transform shape}]
      \def\x{0}
      \def\y{2}
      \pc{0,0}{}

      \pa{1,1}{a1}
      \pa{1,-1}{a2}
      \pb{2,0}{b1}
      \pa{3,0}{a3}
      \pa{-1,0}{a4}
      \draw (c) -- (a1);
      \draw (c) -- (a2) [select];
      \draw (c) -- (a4);
      \draw (b1) -- (a1);
      \draw (b1) -- (a2);
      \draw (b1) -- (a3) [select];
      \outh{a1}
      \outh{a4}
      \outh{a3}

      \capt{0+\x, -2+\y}{Pattern (i).};

      \def\x{5}
      \def\y{-3}
      \pc{0,0}{}

      \pa{1,1}{a1}
      \pa{1,-1}{a2}
      \pb{2,0}{b1}
      \pa{3,0}{a3}
      \pa{-1,0}{a4}

      \pb{1,2}{b2}
      \draw (c) -- (a1);
      \draw (c) -- (a2);
      \draw (c) -- (a4);
      \draw (b1) -- (a1);
      \draw (b1) -- (a2);
      \draw (b1) -- (a3);
      \draw (b2) -- (a1);
      \draw (b2) -- (a4);
      \draw (b2) -- (a3);

      \outh{a3}

      \capt{0+\x, -1.7+\y}{Pattern (iv).};

      \def\x{10}
      \def\y{-3}
      \pc{0,0}{}

      \pa{1,1}{a1}
      \pa{1,-1}{a2}
      \pb{2,0}{b1}
      \pa{3,0}{a3}
      \pa{-1,0}{a4}

      \pb{1,2}{b2}
      \draw (c) -- (a1);
      \draw (c) -- (a2);
      \draw (c) -- (a4);
      \draw (b1) -- (a1);
      \draw (b1) -- (a2);
      \draw (b1) -- (a3);
      \draw (b2) -- (a1);
      \draw (b2) -- (a4);
      \draw (b2) -- (a3);
      \outh{a4}
      \outh{a3}
      \capt{0+\x, -1.7+\y}{Pattern (v).};

      \def\x{6}
      \def\y{2}
      \pc{0,0}{}

      \pa{1,1}{a1}
      \pa{1,-1}{a2}
      \pb{2,0}{b1}
      \pa{3,0}{a3}
      \pa{-1,0}{a4}

      \pb{1,2}{b2}
      \pb{1,-2}{b3}
      \draw (c) -- (a1);
      \draw (c) -- (a2) [select];
      \draw (c) -- (a4);
      \draw (b1) -- (a1);
      \draw (b1) -- (a2);
      \draw (b1) -- (a3);
      \draw (b2) -- (a1) [select];
      \draw (b2) -- (a4);
      \draw (b2) -- (a3);

      \draw (b3) -- (a4);
      \draw (b3) -- (a3) [select];

      \capt{0+\x, -2.7+\y}{Pattern (ii).};

      \def\x{0}
      \def\y{-3}
      \pc{0,0}{}

      \pa{1,1}{a1}
      \pa{1,-1}{a2}
      \pb{2,0}{b1}
      \pa{3,0}{a3}
      \pa{-1,0}{a4}

      \pb{1,2}{b2}
      \draw (c) -- (a1) [select];
      \draw (c) -- (a2);
      \draw (c) -- (a4);
      \draw (b1) -- (a1);
      \draw (b1) -- (a2);
      \draw (b1) -- (a3);
      \draw (b2) -- (a1);
      \draw (b2) -- (a4);
      \draw (b2) -- (a3) [select];

      \capt{0+\x, -1.7+\y}{Pattern (iii).};

      \def\x{0}
      \def\y{-6}
      \pc{0,0}{}

      \pa{1,1}{a1}
      \pa{1,-1}{a2}
      \pb{2,0}{b1}
      \pa{3,0}{a3}
      \pa{-1,0}{a4}

      \pb{1,-2}{b3}
      \draw (c) -- (a1);
      \draw (c) -- (a2);
      \draw (c) -- (a4);
      \draw (b1) -- (a1);
      \draw (b1) -- (a2);
      \draw (b1) -- (a3);

      \draw (b3) -- (a4);
      \draw (b3) -- (a3);

      \outh{a1}
      \capt{0+\x, -2.7+\y}{Pattern (vi).};

      \def\x{5}
      \def\y{-6}
      \pc{0,0}{}

      \pa{1,1}{a1}
      \pa{1,-1}{a2}
      \pb{2,0}{b1}
      \pa{3,0}{a3}
      \pa{-1,0}{a4}

      \pb{1,-2}{b3}
      \draw (c) -- (a1);
      \draw (c) -- (a2) [select];
      \draw (c) -- (a4);
      \draw (b1) -- (a1);
      \draw (b1) -- (a2);
      \draw (b1) -- (a3);

      \draw (b3) -- (a4) [select];
      \draw (b3) -- (a3);

      \outh{a1}
      \outh{a3}

      \capt{0+\x, -2.7+\y}{Pattern (vii).};

      \def\x{10}
      \def\y{-6}
      \pc{0,0}{}

      \pa{1,1}{a1}
      \pa{1,-1}{a2}
      \pb{2,0}{b1}
      \pa{3,0}{a3}
      \pa{-1,0}{a4}

      \pb{1,-2}{b3}
      \draw (c) -- (a1);
      \draw (c) -- (a2) [select];
      \draw (c) -- (a4);
      \draw (b1) -- (a1);
      \draw (b1) -- (a2);
      \draw (b1) -- (a3);

      \draw (b3) -- (a4) [select];
      \draw (b3) -- (a3);

      \outh{a1}
      \outh{a4}
      \outh{a3}
      \capt{0+\x, -2.7+\y}{Pattern (viii).};

      \def\x{0}
      \def\y{-11}
      \pc{0,0}{}

      \pa{1,1}{a1}
      \pa{1,-1}{a2}
      \pb{2,0}{b1}
      \pa{3,0}{a3}
      \pa{-1,0}{a4}

      \pb{1,2}{b2}
      \pb{1,-2}{b3}
      \draw (c) -- (a1);
      \draw (c) -- (a2);
      \draw (c) -- (a4);
      \draw (b1) -- (a1);
      \draw (b1) -- (a2);
      \draw (b1) -- (a3);
      \draw (b2) -- (a4);
      \draw (b2) -- (a3);

      \draw (b3) -- (a4);
      \draw (b3) -- (a3);

      \outh{a1}
      \outh{b2}

      \capt{0+\x, -2.7+\y}{Pattern (ix).};

      \def\x{5}
      \def\y{-11}
      \pc{0,0}{}

      \pa{1,1}{a1}
      \pa{1,-1}{a2}
      \pb{2,0}{b1}
      \pa{3,0}{a3}
      \pa{-1,0}{a4}

      \pb{1,2}{b2}
      \pb{1,-2}{b3}
      \draw (c) -- (a1);
      \draw (c) -- (a2);
      \draw (c) -- (a4);
      \draw (b1) -- (a1);
      \draw (b1) -- (a2) [select];
      \draw (b1) -- (a3);
      \draw (b2) -- (a1);
      \draw (b2) -- (a3) [select];

      \draw (b3) -- (a4) [select];
      \draw (b3) -- (a3);

      \capt{0+\x, -2.7+\y}{Pattern (x).};

      \def\x{10}
      \def\y{-11}
      \pc{0,0}{}

      \pa{1,1}{a1}
      \pa{1,-1}{a2}
      \pb{2,0}{b1}
      \pa{3,0}{a3}
      \pa{-1,0}{a4}

      \pb{1,2}{b2}
      \pb{1,-2}{b3}
      \draw (c) -- (a1);
      \draw (c) -- (a2);
      \draw (c) -- (a4);
      \draw (b1) -- (a1);
      \draw (b1) -- (a2) [select];
      \draw (b1) -- (a3);
      \draw (b2) -- (a1);
      \draw (b2) -- (a3) [select];

      \draw (b3) -- (a4) [select];
      \draw (b3) -- (a3);

      \outh{a4}

      \capt{0+\x, -2.7+\y}{Pattern (xi).};

      \def\x{0}
      \def\y{-16}
      \pc{0,0}{}

      \pa{1,1}{a1}
      \pa{1,-1}{a2}
      \pb{2,0}{b1}
      \pa{3,0}{a3}
      \pa{-1,0}{a4}

      \pb{1,2}{b2}
      \draw (c) -- (a1);
      \draw (c) -- (a2) [select];
      \draw (c) -- (a4);
      \draw (b1) -- (a1);
      \draw (b1) -- (a2);
      \draw (b1) -- (a3) [select];
      \draw (b2) -- (a1);
      \draw (b2) -- (a4) [select];
      \outh{a3}

      \capt{0+\x, -1.7+\y}{Pattern (xii).};

      \def\x{5}
      \def\y{-16}
      \pc{0,0}{}

      \pa{1,1}{a1}
      \pa{1,-1}{a2}
      \pb{2,0}{b1}
      \pa{3,0}{a3}
      \pa{-1,0}{a4}

      \pb{1,2}{b2}

      \draw (c) -- (a1);
      \draw (c) -- (a2) [select];
      \draw (c) -- (a4);
      \draw (b1) -- (a1);
      \draw (b1) -- (a2);
      \draw (b1) -- (a3) [select];
      \draw (b2) -- (a1);
      \draw (b2) -- (a4) [select];
      \outh{a4}
      \outh{a3}
      \capt{0+\x, -1.7+\y}{Pattern (xiii).};

    \end{tikzpicture}

    \caption{Third case of $C_4^1$.}
    \label{fig:case2_3}
  \end{figure}
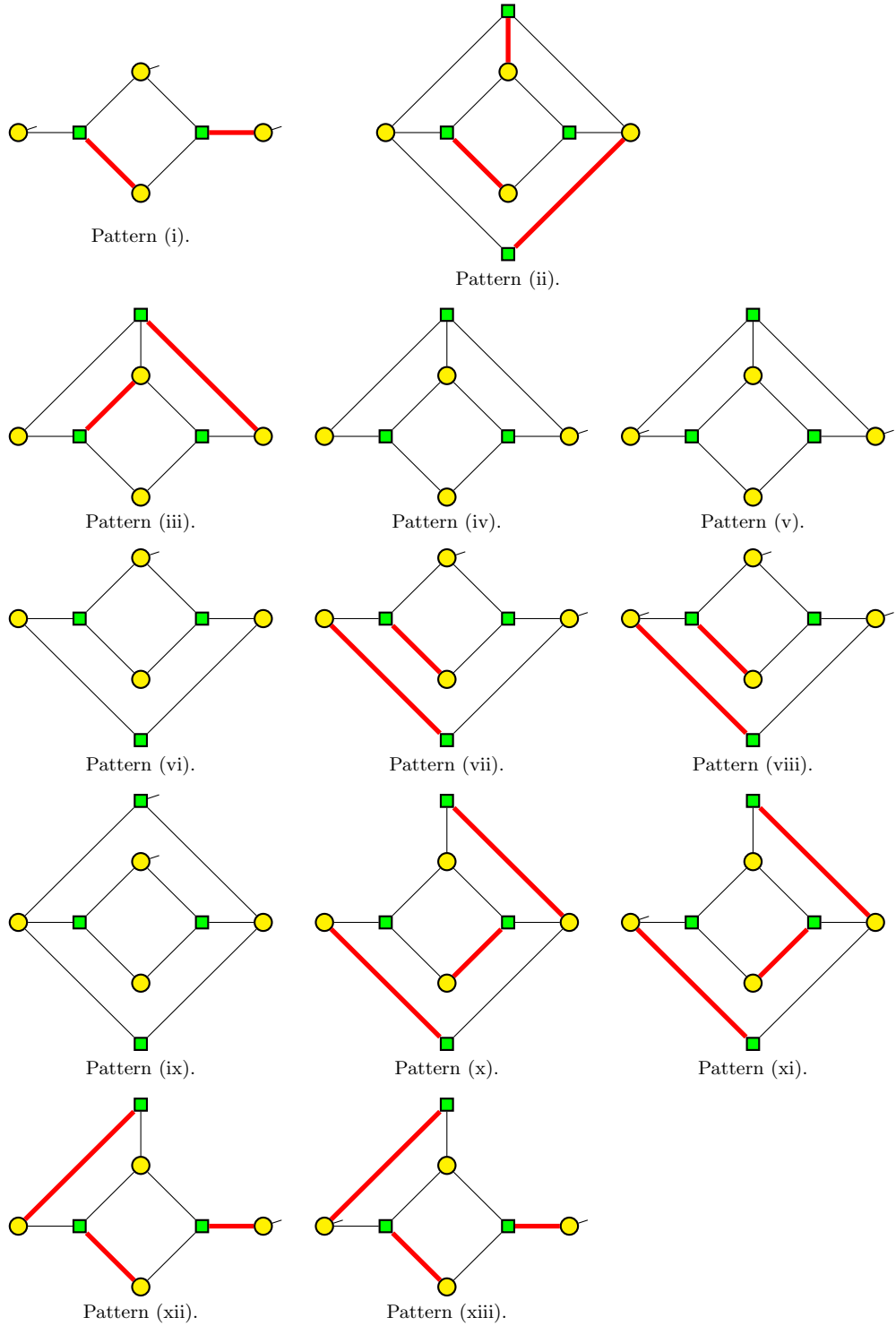

\newpage

  Third, assume that there exists $a_3$ and $a_4$ in $A  \sm U$ such that $a_3 \in N_G(b_2)$, $a_3 \not \in N_G(b_1)$, $a_4 \in N_G(b_1)$, and $a_4 \not \in N_G(b_2)$.
  This case is the most involved one, as many subcases have to be considered.
  Namely, we need to take care that there is no vertex in $B \sm U'$ of degree $0$ in $G \gm U'$ and to make sure that property (\ref{qh}) is maintained.
  In order to reduce the number of subcases, we sometimes do not take into consideration some vertex $u$ of $B$ that became of degree $0$ in $G \gm U$ after the update of $U'$ in the cases where property (\ref{qg}) is still maintained with the value of $s''$, $f''$, and $d''$ such that $d'' = d' -2$ and $f'' = f'+2$.
  This excludes the case where $u$ is connected to $\{v_1,v_2,v_3\}$ such that $\gamma'(v_i) = \drain$ for every $i \in [3]$ and then we can safely define
  $U'' = U' \cup \{u\}$,  $M'' = M' \cup \{\{u,v_1\}\}$, and $\gamma''(v) = \gamma'(v)$ for every $v \in (A \cap U') \sm \{v_1,v_2,v_3\}$, $\gamma''(v_1) = \select$, and $\gamma''(v_2) = \gamma''(v_3) = \forg$.
  The same applies to the case where $u$ is connected to $\{v_1,v_2,v_3\}$ such that there exists $i \in [3]$ such that $\gamma'(v_i) \not = \drain$.
  In this case, we only add $u$ to $U'$ without adding edges to $M'$, but for every $i \in [3]$ such that $\gamma'(v_i) = \drain$, then $\gamma'(v_i) = \forg$.
  This last condition is necessary in order to make sure that property (\ref{qd}) is maintained.
  In both cases, we will ignore these vertices of $B$ in the analysis, but we need to keep in mind that we need to add them each time one of these cases appears.
  We also sometime forget the third neighbor of a vertex of $A$ whenever its existence does not change how to resolve the pattern.

  Let $K = \{a_1,a_2,a_3,a_4,b_1,b_2\}$.
  As there are at most five edges between $K$ and $V(G) \sm (U \cup K)$, at most two vertices of $B$ can become of degree $0$.
  We depict in Fig.~\ref{fig:case2_3} every possible way these vertices can be connected to $K$ together with the possible extra edge from $K$ to $V(G) \sm (U \cup K)$.
  First note that Patterns (ii), (iii), and (x) have no neighbor outside of the pattern and contain less than $10$ vertices, so we have already resolved these patterns.
  Note also that Patterns (iv), (v), (vi), and (ix) correspond to patterns of condition~(\ref{c1}) and so are not in $G$.
  Moreover, Patterns (xi), (xii), and (xiii) do not need to have vertices labeled $\drain$ in order to define the triple $(U', M', \gamma')$ that incorporates the corresponding pattern to the part already treated and that respects properties (\ref{qa}) to (\ref{qh}).
  Therefore, there are only three remaining patterns to resolve, namely (i), (vii), and (viii).
  Note that in these three cases, there is at most one extra vertex of $B$.

  In the following, we focus our attention on Pattern (i) but the same arguments apply to Patterns  (vii) and (viii).
  As discussed above, we need to take care of property (\ref{qh}).
  If the pattern has no neighbor inside a $C_4$, then we can extend the triple $(U,M,\gamma)$ where the vertices of $A$ of this pattern that will not be labeled $\select$ are labeled $\drain$.
  Note that in this case, properties (\ref{qa}) to (\ref{qh}) are maintained.
  Assume that there is a $C_4$ such that exactly one vertex of this $C_4$ is a neighbor of a vertex of $K$.
  Then we are in on of the cases depicted in Fig.~\ref{fig:case2_4}, and we can extend the triple $(U,M,\gamma)$.
  Assume now that it is not the case and there is a $C_4$ with two vertices of this $C_4$ that have a neighbor in $K$.
  Then we are in one of the cases depicted in Fig.~\ref{fig:case2_5}.
  For  Pattern (xviii), it cannot exist because of condition~(\ref{c1}).
  For Pattern (xvi), we can extend the triple $(U,M,\gamma)$ according to the figure.
  For  Pattern (xvii), we assume
  that
  for both $C_4$'s of  this pattern,
  we cannot have either Pattern (xvi) or any of the patterns of Fig~\ref{fig:case2_4}.
  {Otherwise, we start by solving one of these patterns.}
  This implies that we can extend the triple $(U,M,\gamma)$ where the vertices of $A$ of the pattern that will not be labeled $\select$ are labeled $\drain$, and still respect property~(\ref{qh}).

  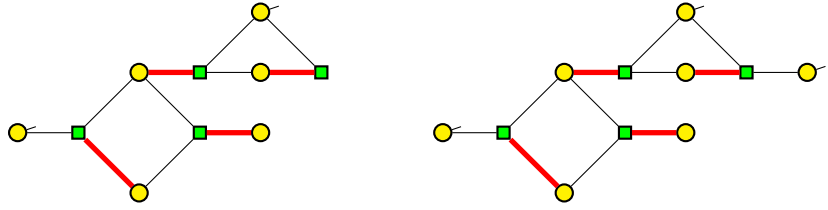
\begin{figure}[H]
    \centering
    \begin{tikzpicture}[scale=0.8, every node/.style={transform shape}]
      \def\x{0}
      \def\y{-14}
      \pc{0,0}{}

      \pa{1,1}{a1}
      \pa{1,-1}{a2}
      \pb{2,0}{b1}
      \pa{3,0}{a3}
      \pa{-1,0}{a4}

      \pb{2,1}{b5}
      \pa {3,2}{a7}
      \pa {3,1}{a8}
      \pb{4,1}{b6}

      \draw (a1) -- (b5) [select];
      \draw (a7) -- (b5);
      \draw (a8) -- (b5);
      \draw (a7) -- (b6);
      \draw (a8) -- (b6) [select];

      \draw (c) -- (a1);
      \draw (c) -- (a2) [select];
      \draw (c) -- (a4);
      \draw (b1) -- (a1);
      \draw (b1) -- (a2);
      \draw (b1) -- (a3) [select];

      \outh{a7}
      \outh{a4}

      \def\x{7}
      \def\y{-14}
      \pc{0,0}{}

      \pa{1,1}{a1}
      \pa{1,-1}{a2}
      \pb{2,0}{b1}
      \pa{3,0}{a3}
      \pa{-1,0}{a4}

      \pb{2,1}{b5}
      \pa {3,2}{a7}
      \pa {3,1}{a8}
      \pb{4,1}{b6}
      \pa {5,1}{a9}

      \draw (a1) -- (b5) [select];
      \draw (a7) -- (b5);
      \draw (a8) -- (b5);
      \draw (a7) -- (b6);
      \draw (a8) -- (b6) [select];

      \draw (c) -- (a1);
      \draw (c) -- (a2) [select];
      \draw (c) -- (a4);
      \draw (b1) -- (a1);
      \draw (b1) -- (a2);
      \draw (b1) -- (a3) [select];
      \draw (b6) -- (a9);

      \outh{a7}
      \outh{a9}
      \outh{a4}

    \end{tikzpicture}

    \caption{Fourth case of $C_4^1$.}
    \label{fig:case2_4}
  \end{figure}

  \begin{figure}[H]
  \vspace{-.5cm}
    \centering
    \begin{tikzpicture}[scale=0.9, every node/.style={transform shape}]
      \def\x{2.5}
      \def\y{2}
      \pc{0,0}{}

      \pa{1,1}{a1}
      \pa{1,-1}{a2}
      \pb{2,0}{b1}
      \pa{3,0}{a3}
      \pa{-1,0}{a4}

      \pb{4,1}{b2}
      \pb{4,-1}{b3}
      \pa{4,0}{a5}
      \pa{5,0}{a6}

      \draw (c) -- (a1);
      \draw (c) -- (a2);
      \draw (c) -- (a4) [select];
      \draw (b1) -- (a1);
      \draw (b1) -- (a2) [select];
      \draw (b1) -- (a3);
      \draw (a1) -- (b2) [select];
      \draw (a3) -- (b3) [select];
      \draw (b3) -- (a5);
      \draw (b3) -- (a6);
      \draw (b2) -- (a5);
      \draw (b2) -- (a6);

      \outh{a4}
      \outh{a6}

      \capt{1+\x, -2+\y}{Pattern (xvi).};

      \def\x{0}
      \def\y{-2}
      \pc{0,0}{}

      \pa{1,1}{a1}
      \pa{1,-1}{a2}
      \pb{2,0}{b1}
      \pa{3,0}{a3}
      \pa{-1,0}{a4}
      \pb{-1,-2}{b2}
      \pb{3,-2}{b3}
      \pa{1,-2}{a5}
      \pa{1,-3}{a6}

      \draw (c) -- (a1);
      \draw (c) -- (a2) [select];
      \draw (c) -- (a4);
      \draw (b1) -- (a1);
      \draw (b1) -- (a2);
      \draw (b1) -- (a3) [select];
      \draw (a4) -- (b2);
      \draw (a3) -- (b3);
      \draw (b3) -- (a5) [select];
      \draw (b3) -- (a6);
      \draw (b2) -- (a5);
      \draw (b2) -- (a6);

      \outh{a1}
      \outh{a4}

      \outb{a6}
      \capt{0+\x, -4+\y}{Pattern (xvii).};

      \def\x{7}
      \def\y{-2}
      \pc{0,0}{}

      \pa{1,1}{a1}
      \pa{1,-1}{a2}
      \pb{2,0}{b1}
      \pa{3,0}{a3}
      \pa{-1,0}{a4}
      \pb{-1,-2}{b2}
      \pb{3,-2}{b3}
      \pa{1,-2}{a5}
      \pa{1,-3}{a6}

      \draw (c) -- (a1);
      \draw (c) -- (a2);
      \draw (c) -- (a4);
      \draw (b1) -- (a1);
      \draw (b1) -- (a2);
      \draw (b1) -- (a3);
      \draw (a4) -- (b2);
      \draw (a3) -- (b3);
      \draw (b3) -- (a5);
      \draw (b3) -- (a6);
      \draw (b2) -- (a5);
      \draw (b2) -- (a6);

      \outh{a1}
      \outb{a6}

      \capt{0+\x, -4+\y}{Pattern (xviii).};

    \end{tikzpicture}
    \caption{Fifth case of $C_4^1$.
    }
    \label{fig:case2_5}
  \end{figure}

  \paragraph{{\bf Case 3:}} there exists a $C_4^2$ $G'$ in $G \sm U$.\smallskip

 \noindent We assume in this case that for every $u \in B \sm U$, $d_{\bar{U}}(u) \geq 1$ and there is no $C_4^1$ in $G \gm U$.
  In particular, this implies that Case~2 will not occur anymore, and therefore, in the following property (\ref{qh}) will always be maintained.
  Thus, now we only focus on properties (\ref{qa}) to (\ref{qg}).
  Let $V(G') \cap A = \{a_1,a_2\}$ and $V(G') \cap B = \{b_1,b_2\}$.
  For this case, when we say that  we define $M'$ and $\gamma'$ by updating the values of $M$ and $\gamma$ according to a figure, it means that
  we add to $M$ every red edge of the figure and for every $v$ of $A$ that is depicted in the figure, then $\gamma'(v) = \select$ if $v$ is either the endpoint of a red edge or $v \in U$ and $\gamma(v) = \select$,
  otherwise either $v$ has a neighbor outside of the pattern depicted in the figure, and thus $\gamma'(v) = \drain$, or it does not and thus $\gamma'(v) = \forg$.

  First, assume that $N_G(b_1) = N_G(b_2) = \{a_1,a_2\}$.
  In this case, we need to distinguish between the labels of the vertices of  $(N_G(b_1) \cup N_G(b_2)) \cap U$.
  There are four possibilities depicted in Fig.~\ref{fig:case3_1}.
  We define $U'$ to be the union of $U$ and the vertices of $G'$, and we define $M'$ and $\gamma'$ by updating the values of $M$ and $\gamma$ according to Fig.~\ref{fig:case3_1}.
  One can check that, in each case, properties (\ref{qa}) to (\ref{qg}) are maintained.

  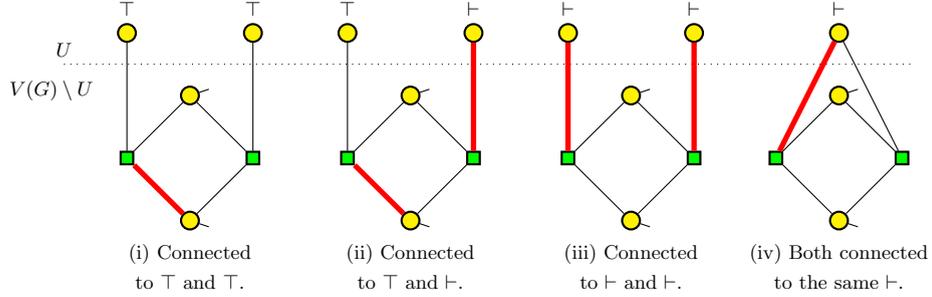
\begin{figure}[t!]
    \centering
    \begin{tikzpicture}[scale=0.83, every node/.style={transform shape}]
      \capt{-2,3.5}{$U$};
      \capt{-2.2,2.8}{$V(G) \sm U$};

      \draw[dotted] (-1,3.5) -- (12.5,3.5);

      \def\x{0}
      \def\y{2}
      \pc{0,0}{}

      \pa{1,1}{a1}
      \pa{1,-1}{a2}
      \pb{2,0}{b1}

      \draw (c) -- (a1);
      \draw (c) -- (a2) [select];
      \draw (b1) -- (a1);
      \draw (b1) -- (a2);

      \paa{0,2}{s1}{label=$\select$}
      \paa{2,2}{s2}{label=$\select$}
      \draw (s1) -- (c);
      \draw (s2) -- (b1);

      \outh{a1}
      \outb{a2}

      \capt{0+\x, -1.8+\y}{(i) Connected };
      \capt{0+\x, -2.2+\y}{to $\select$ and $\select$.};

      \def\x{3.5}
      \def\y{2}
      \pc{0,0}{}

      \pa{1,1}{a1}
      \pa{1,-1}{a2}
      \pb{2,0}{b1}

      \draw (c) -- (a1);
      \draw (c) -- (a2) [select];
      \draw (b1) -- (a1);
      \draw (b1) -- (a2);

      \paa{0,2}{s1}{label=$\select$}
      \paa{2,2}{s2}{label=$\drain$}
      \draw (s1) -- (c);
      \draw (s2) -- (b1)  [select];

      \capt{0+\x, -1.8+\y}{(ii) Connected };
      \capt{0+\x, -2.2+\y}{to $\select$ and $\drain$.};

      \outh{a1}
      \outb{a2}

      \def\x{7}
      \def\y{2}
      \pc{0,0}{}

      \pa{1,1}{a1}
      \pa{1,-1}{a2}
      \pb{2,0}{b1}

      \draw (c) -- (a1);
      \draw (c) -- (a2);
      \draw (b1) -- (a1);
      \draw (b1) -- (a2);

      \paa{0,2}{s1}{label=$\drain$}
      \paa{2,2}{s2}{label=$\drain$}
      \draw (s1) -- (c) [select];
      \draw (s2) -- (b1) [select];

      \outh{a1}
      \outb{a2}

      \capt{0+\x, -1.8+\y}{(iii) Connected };
      \capt{0+\x, -2.2+\y}{to $\drain$ and $\drain$.};

      \def\x{10.3}
      \def\y{2}
      \pc{0,0}{}

      \pa{1,1}{a1}
      \pa{1,-1}{a2}
      \pb{2,0}{b1}

      \draw (c) -- (a1);
      \draw (c) -- (a2);
      \draw (b1) -- (a1);
      \draw (b1) -- (a2);

      \paa{1,2}{s2}{label=$\drain$}
      \draw (s2) -- (c) [select];
      \draw (s2) -- (b1);

      \outh{a1}
      \outb{a2}

      \capt{0+\x, -1.8+\y}{(iv) Both connected };
      \capt{0+\x, -2.2+\y}{to the same $\drain$.};

    \end{tikzpicture}

    \caption{First cases of $C_4^2$. The vertices of $A \cap U$ labeled $\forg$ are treated in the same way than vertices labeled $\drain$.
      If both are connected to the same vertex labeled $\select$,  the case depicted in subfigure (i) applies.
    }
    \label{fig:case3_1}
  \end{figure}

  Secondly, assume that there exists $a_3 \in A  \sm U$ such that $a_3 \in N_G(b_2)$ and $a_3 \not \in N_G(b_1)$.
  Then we are in the case depicted in Fig.~\ref{fig:case3_2}.
  We define $U'$ to be the union of $U$ and the vertices of $G'$, and we define $M'$ and $\gamma'$ by updating the values of $M$ and $\gamma$ according to Fig.~\ref{fig:case3_2}.
  One can check that  properties (\ref{qa}) to (\ref{qg}) are maintained.

\vspace{-.3cm}
  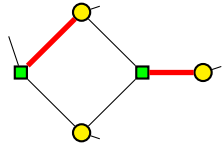
\begin{figure}[H]
    \centering
    \begin{tikzpicture}[scale=0.8, every node/.style={transform shape}]
      \def\x{0}
      \def\y{2}
      \pc{0,0}{}

      \pa{1,1}{a1}
      \pa{1,-1}{a2}
      \pb{2,0}{b1}
      \pa{3,0}{a3}
      \draw (c) -- (a1) [select];
      \draw (c) -- (a2);
      \draw (b1) -- (a1);
      \draw (b1) -- (a2);
      \draw (b1) -- (a3) [select];

      \outh{a1}
      \outb{a2}
      \outh{a3}
      \outtt{c}

    \end{tikzpicture}
    \caption{Second case of $C_4^2$.} \vspace{-.3cm}
    \label{fig:case3_2}
  \end{figure}

  Thirdly, assume that there exist $a_3$ and $a_4$ in $A  \sm U$ such that $a_3 \in N_G(b_2)$, $a_3 \not \in N_G(b_1)$, $a_4 \in N_G(b_1)$, and $a_4 \not \in N_G(b_2)$.
  As in Case~2, this situation is a bit more complicated to handle, but now we do not need to take care of property (\ref{qh}), which simplifies the case analysis compared to Case~2.
  Again, we shall ignore the vertices of $B$ that became of degree $0$ after the removal of the new $U'$ when the situation is favorable to us, i.e., in exactly the same situations as in Case~2, and thus it does not interfere with the fact that the new triple $(U',M', \gamma')$ respects property (\ref{qg}).
  We also sometimes forget the third neighbor of a vertex of $A$ when its existence does not change how to resolve the pattern.
  Using the fact that, by condition~(\ref{c2}), two vertices cannot have the same neighborhood, it follows that the possible patterns are those depicted in Fig.~\ref{fig:case3_3}.
  As Pattern (ii) is not connected to the rest of the graph and has less than $10$ neighbors,  it has already been treated by the algorithm.
  Note that Pattern (iii) cannot exist because of condition~(\ref{c1}).
  For all other patterns, namely Pattern (i) and Patterns (iv) to (ix), we define $U'$ to be the union of $U$ and the vertices of the given pattern, and we define $M'$ and $\gamma'$ by updating the values of $M$ and $\gamma$ according to Fig.~\ref{fig:case3_2}.
  One can check that properties (\ref{qa}) to (\ref{qg}) are maintained.

  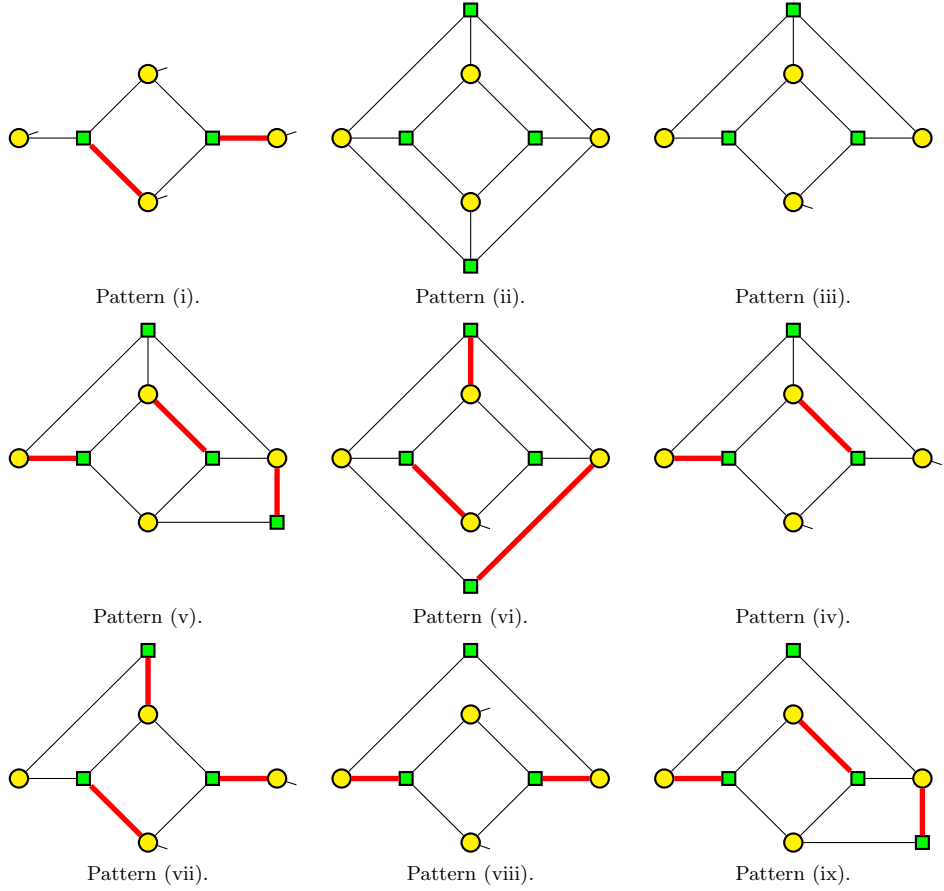
\begin{figure}[H]
    \centering
   \begin{tikzpicture}[scale=0.85, every node/.style={transform shape}]
      \def\x{0}
      \def\y{2}
      \pc{0,0}{}

      \pa{1,1}{a1}
      \pa{1,-1}{a2}
      \pb{2,0}{b1}
      \pa{3,0}{a3}
      \pa{-1,0}{a4}
      \draw (c) -- (a1);
      \draw (c) -- (a2) [select];
      \draw (c) -- (a4);
      \draw (b1) -- (a1);
      \draw (b1) -- (a2);
      \draw (b1) -- (a3)  [select];

      \outh{a1}
      \outh{a2}
      \outh{a4}
      \outh{a3}

      \com{1,-2.5}{Pattern (i).};

      \def\x{5}
      \def\y{2}
      \pc{0,0}{}

      \pa{1,1}{a1}
      \pa{1,-1}{a2}
      \pb{2,0}{b1}
      \pa{3,0}{a3}
      \pa{-1,0}{a4}

      \pb{1,2}{b2}
      \pb{1,-2}{b3}
      \draw (c) -- (a1);
      \draw (c) -- (a2);
      \draw (c) -- (a4);
      \draw (b1) -- (a1);
      \draw (b1) -- (a2);
      \draw (b1) -- (a3);
      \draw (b2) -- (a1);
      \draw (b2) -- (a4);
      \draw (b2) -- (a3);
      \draw (b3) -- (a4);
      \draw (b3) -- (a3) ;
      \draw (b3) -- (a2) ;

      \com{1,-2.5}{Pattern (ii).};

      \def\x{10}
      \def\y{2}
      \pc{0,0}{}

      \pa{1,1}{a1}
      \pa{1,-1}{a2}
      \pb{2,0}{b1}
      \pa{3,0}{a3}
      \pa{-1,0}{a4}

      \pb{1,2}{b2}
      \draw (c) -- (a1);
      \draw (c) -- (a2);
      \draw (c) -- (a4);
      \draw (b1) -- (a1);
      \draw (b1) -- (a2);
      \draw (b1) -- (a3);
      \draw (b2) -- (a1);
      \draw (b2) -- (a4);
      \draw (b2) -- (a3);

      \outb{a2}

      \com{1,-2.5}{Pattern (iii).};

      \def\x{10}
      \def\y{-3}
      \pc{0,0}{}

      \pa{1,1}{a1}
      \pa{1,-1}{a2}
      \pb{2,0}{b1}
      \pa{3,0}{a3}
      \pa{-1,0}{a4}

      \pb{1,2}{b2}
      \draw (c) -- (a1);
      \draw (c) -- (a2);
      \draw (c) -- (a4) [select];
      \draw (b1) -- (a1) [select];
      \draw (b1) -- (a2);
      \draw (b1) -- (a3);
      \draw (b2) -- (a1);
      \draw (b2) -- (a4);
      \draw (b2) -- (a3);

      \outb{a2}
      \outb{a3}

      \com{1,-2.5}{Pattern (iv).};

      \def\x{0}
      \def\y{-3}
      \pc{0,0}{}

      \pa{1,1}{a1}
      \pa{1,-1}{a2}
      \pb{2,0}{b1}
      \pa{3,0}{a3}
      \pa{-1,0}{a4}

      \pb{1,2}{b2}
      \pb{3,-1}{b3}
      \draw (c) -- (a1);
      \draw (c) -- (a2);
      \draw (c) -- (a4) [select];
      \draw (b1) -- (a1) [select];
      \draw (b1) -- (a2);
      \draw (b1) -- (a3);
      \draw (b2) -- (a1);
      \draw (b2) -- (a4);
      \draw (b2) -- (a3);
      \draw (b3) -- (a2);
      \draw (b3) -- (a3) [select];

      \com{1,-2.5}{Pattern (v).};

      \def\x{5}
      \def\y{-3}
      \pc{0,0}{}

      \pa{1,1}{a1}
      \pa{1,-1}{a2}
      \pb{2,0}{b1}
      \pa{3,0}{a3}
      \pa{-1,0}{a4}

      \pb{1,2}{b2}
      \pb{1,-2}{b3}
      \draw (c) -- (a1);
      \draw (c) -- (a2) [select];
      \draw (c) -- (a4);
      \draw (b1) -- (a1);
      \draw (b1) -- (a2);
      \draw (b1) -- (a3);
      \draw (b2) -- (a1) [select];
      \draw (b2) -- (a4);
      \draw (b2) -- (a3);
      \draw (b3) -- (a4);
      \draw (b3) -- (a3) [select];

      \com{1,-2.5}{Pattern (vi).};

      \outb{a2}

      \def\x{0}
      \def\y{-8}
      \pc{0,0}{}

      \pa{1,1}{a1}
      \pa{1,-1}{a2}
      \pb{2,0}{b1}
      \pa{3,0}{a3}
      \pa{-1,0}{a4}

      \pb{1,2}{b2}
      \draw (c) -- (a1);
      \draw (c) -- (a2) [select];
      \draw (c) -- (a4);
      \draw (b1) -- (a1);
      \draw (b1) -- (a2);
      \draw (b1) -- (a3) [select];
      \draw (b2) -- (a1) [select];
      \draw (b2) -- (a4);

      \outb{a2}
      \outb{a3}
      \com{1,-1.5}{Pattern (vii).};

      \def\x{5}
      \def\y{-8}
      \pc{0,0}{}

      \pa{1,1}{a1}
      \pa{1,-1}{a2}
      \pb{2,0}{b1}
      \pa{3,0}{a3}
      \pa{-1,0}{a4}

      \pb{1,2}{b2}
      \draw (c) -- (a1);
      \draw (c) -- (a2);
      \draw (c) -- (a4) [select];
      \draw (b1) -- (a1);
      \draw (b1) -- (a2);
      \draw (b1) -- (a3) [select];
      \draw (b2) -- (a3);
      \draw (b2) -- (a4);

      \outh{a1}
      \outb{a2}
      \com{1,-1.5}{Pattern (viii).};

      \def\x{10}
      \def\y{-8}
      \pc{0,0}{}

      \pa{1,1}{a1}
      \pa{1,-1}{a2}
      \pb{2,0}{b1}
      \pa{3,0}{a3}
      \pa{-1,0}{a4}

      \pb{1,2}{b2}
      \pb{3,-1}{b3}
      \draw (c) -- (a1);
      \draw (c) -- (a2);
      \draw (c) -- (a4) [select];
      \draw (b1) -- (a1) [select];
      \draw (b1) -- (a2);
      \draw (b1) -- (a3);
      \draw (b2) -- (a4);
      \draw (b2) -- (a3);
      \draw (b3) -- (a2);
      \draw (b3) -- (a3) [select];

      \com{1,-1.5}{Pattern (ix).};

    \end{tikzpicture}

    \caption{Third case of $C_4^2$.}
    \label{fig:case3_3}
  \end{figure}

  \paragraph{{\bf Case 4:}}  there exists $u \in B \sm U$ such that $d_{\bar{U}}(u) = 2$.\smallskip

  \noindent We assume in this case that for every $u \in B \sm U$, $d_{\bar{U}}(u) \geq 2$, and that there is no $C_4$ in $G \gm U$.

  Let $u \in B \sm U$ such that $d_{\bar{U}}(u)=2$.
  Let $\{v_1,v_2\} = N_G(u) \sm U$ and $W = N_G(u) \cap U$, and $U' = U \cup \{u,v_1,v_2\}$.
  Note that both $v_1$ and $v_2$ have at least one neighbor that is not in $U$.
  Note also that $|W| \leq 1$.

  Assume first that $W = \emptyset$ or $W = \{w\}$ and $\gamma(w) \not = \drain$.
  Let $M'$ arise from $M$ by adding the edge $uv_1$.
  Let $\gamma'$ be obtained from $\gamma$ where $\gamma'(v_1) = \select$ and  $\gamma'(v_2) = \drain$.
  Clearly, replacing $(U,M,\gamma)$ with $(U',M',\gamma')$, properties (\ref{qa}) to (\ref{qf}) are maintained.
  We obtain that $s' = s+1$, $d' = d + 1$, and $f' = f$.
  These inequalities directly imply that property (\ref{qg}) is maintained.

  Assume now that  $W = \{w\}$ and $\gamma(w) = \drain$. 
  Let $M'$ arise from $M$ by adding the edge $uw$.
  Let $\gamma'$ be obtained from $\gamma$ where $ \gamma'(v_1) = \gamma'(v_2) = \drain$
  and $\gamma'(w) = \select$.
  Clearly, replacing $(U,M,\gamma)$ with $(U',M',\gamma')$, properties (\ref{qa}) to (\ref{qf}) are maintained.
  We obtain anew that $s' = s+1$, $d' = d +1$, and $f' = f$.
  These inequalities imply again that property (\ref{qg}) is maintained.

  \paragraph{}

  Since the considered cases exhaust all possibilities,
  and in each case we described an extension that maintains the relevant properties,
  the proof is complete up to the running time of the algorithm, which we proceed to analyze.
  One can easily
  check whether an extension operation can be realized
  in time $O(n)$, where $n= n(G)$.
  Indeed, we consider a constant number of patterns, and
  in each of them we fix a specific vertex.
  Then, for each vertex in $V(G) \setminus U$, we can check whether this vertex corresponds to a specific vertex of one of the patterns
  in constant time, by exploring the neighborhood at distance at most ${p-1}$ from this vertex, where $p =11$ is the size of the largest pattern (cf. Fig.~\ref{fig:case2_4}). As in each extension operation the size of $U$ is incremented by at least one,
  it follows that the overall running time of the algorithm is $O(n^2)$.
\end{proof}

Equipped with Lemma~\ref{lemmanewDelta}, we are now ready to prove Theorem~\ref{th:approxDelta3}.


%
%

\bigskip
\begin{proofof}
  Again, we give an algorithmic proof such that the running time of the corresponding algorithm is polynomial in $n(G)$.
  Let $\alpha=\frac{5}{9}$ and
  let ${\cal G}$ be the set of all bipartite graphs $G$ of maximum degree at most $3$ 
  such that every component of $G$ has a vertex of degree at most $2$.
  First, we prove that, for every given graph $G$ in ${\cal G}$,
  one can find in polynomial time a uniquely restricted matching $M$ of size at least $\alpha\nu_{ur}(G)$.
  Therefore, let $G$ be in ${\cal G}$.

In order to be able to apply Lemma~\ref{lemmanewDelta}, we apply some reductions. Namely, as long as at least one of the following conditions is fulfilled in $G$, we apply the corresponding reduction, which is described and analyzed below:
  \begin{itemize}
  \item[$\bullet$] {\emph{Condition  (1)}:
      $G$  contains one of the patterns $R.1$, $R.2$, $R.3$, $R.4$, $R.5$, or $R.6$ depicted in Fig.~\ref{fig:redpat}.}
  \item[$\bullet$] {\emph{Condition  (2)}: There exist two vertices in $G$ with the same neighborhood.}
  \item[$\bullet$] {\emph{Condition (3)}: There exists a vertex $u$ in $G$ of degree $1$.}
  \item[$\bullet$] {\emph{Condition (4)}: There exists a vertex $u$ in $G$ of degree $0$.}
  \end{itemize}


%
%

  \paragraph{Reduction (1).}
  First, if there is in $G$ a subgraph $P$ isomorphic to the pattern $R.1$ or the pattern $R.2$ depicted in Fig.~\ref{fig:redpat}(i) and Fig.~\ref{fig:redpat}(ii), respectively, such that only the vertex $x$ has a neighbor outside of $P$ in $G$,
  then we define $G'$ to be the graph obtained from $G$ by removing every vertex of $P$  except vertex $x$  as depicted in Fig.~\ref{fig:redpat}(i) and Fig.~\ref{fig:redpat}(ii).
  Then, if there is in $G$ a subgraph $P$ isomorphic to the pattern $R.3$ or the pattern $R.4$ depicted in Fig.~\ref{fig:redpat}(iii) and Fig.~\ref{fig:redpat}(iv), respectively, such that only the vertices $x$ and $y$ have a neighbor outside of $P$ in $G$,
  then we define $G'$ to be the graph obtained from $G$ by removing
  every vertex of $P$ except vertices $x$ and $y$ as depicted in Fig.~\ref{fig:redpat}(iii) ad Fig.~\ref{fig:redpat}(iv).
  Finally, if there is in $G$ a subgraph $P$ isomorphic to the pattern $R.5$ or the pattern $R.6$ depicted in Fig.~\ref{fig:redpat}(v) and Fig.~\ref{fig:redpat}(vi), respectively, such that only the vertices $x$, $y$, and $z$ have a neighbor outside of $P$ in $G$,
  then we define $G'$ to be the graph obtained from $G$ by removing
  every vertex of $P$ except vertices $x$, $y$, and $z$ as depicted in Fig.~\ref{fig:redpat}(v) and Fig.~\ref{fig:redpat}(vi).

  Let $M^*$ be the set of red edges depicted in the corresponding figures.
  Then $\nu_{ur}(G')\geq \nu_{ur}(G)-|M^*|$.
  Indeed, in each case, we can exhaustively check that we cannot select more edges inside the pattern $P$, and in each configuration we provide, in the figures, a solution that leaves vertex $x$ (and vertices $y$ and $z$, if they exist) free to be taken from outside of $P$.
  Moreover, the choice of the red edges is such that they cannot be inside any alternating cycle, whatever the edges we select outside of $P$.
  If $M'$ is a uniquely restricted matching of $G'$,
  then $M'\cup M^*$ is a uniquely restricted matching of $G$.


  \paragraph{Reduction (2).}
  Assume that $v$ and $v'$ are two vertices having exactly the same neighborhood in $G$. We define $G' = G \gm \{v'\}$. 
  Indeed, $v$ and $v'$ cannot be inside the same uniquely restricted matching, as otherwise there would exist an alternating cycle.
  This implies that $\nu_{ur}(G')\geq \nu_{ur}(G)$.
  Hence, we can safely remove $v'$ from $G$.
  If $M'$ is a uniquely restricted matching of $G'$,
  then $M'$ is a uniquely restricted matching of $G$.


  \paragraph{Reduction (3).}
  If $G$ has a vertex $u$ of degree $1$, and $v$ is the neighbor of $u$, then let $G'=G-\{ u,v\}$.
  Clearly, $\nu_{ur}(G')\geq \nu_{ur}(G)-1$,
  and if $M'$ is a uniquely restricted matching of $G'$,
  then $M'\cup \{ uv\}$ is a uniquely restricted matching of $G$.



  \paragraph{Reduction (4).}
  If $G$ has a vertex $u$ of degree $0$, then let $G'=G-\{ u\}$.
  Clearly, $\nu_{ur}(G') = \nu_{ur}(G)$,
  and if $M'$ is a uniquely restricted matching of $G'$,
  then $M'$ is a uniquely restricted matching of $G$.

  \bigskip
  In each of the four reductions defined above, note that the graph $G'$ belongs to ${\cal G}$.
  Similarly to in the proof of Lemma~\ref{lemmanewDelta},
  it can be checked whether a reduction can be realized
  in time $O(n)$, where $n = n(G)$.

  \paragraph{}
  By iteratively repeating these reductions,
  we eventually obtain a set $M_1$ of edges of $G$ as well as a subgraph $G_2$ of $G$ such that
  $G_2\in {\cal G}$,
  $G_2$ does not contain a subgraph $P$ isomorphic to one of the patterns $R.1$, $R.2$, $R.3$, $R.4$, $R.5$, or $R.6$ depicted in Fig.~\ref{fig:redpat},
  $G_2$ does not contain two vertices with the same neighborhood,
  $\nu_{ur}(G_2)\geq \nu_{ur}(G)-|M_1|$,
  $M_1\cup M_2$ is a uniquely restricted matching of $G$ for every uniquely restricted matching $M_2$ of $G_2$, and
  either $n(G_2)=0$ or $\delta(G_2)\geq 2$.
  Now, by suitably choosing the bipartition of each component $K$ of $G_2$,
  and applying Lemma~\ref{lemmanewDelta} to $K$,
  one can determine in polynomial time a uniquely restricted matching $M_2$ of $G_2$
  with $|M_2|\geq \alpha\nu_{ur}(G_2)$.
  Since the set $M_1\cup M_2$ is a uniquely restricted matching of $G$ of size at least
  $|M_1|+\alpha\nu_{ur}(G_2)\geq |M_1|+\alpha(\nu_{ur}(G)-|M_1|)\geq \alpha\nu_{ur}(G)$,
  the proof of our claim about ${\cal G}$ is complete. Note that the overall running time of the algorithm for graphs in ${\cal G}$ is $O(n^2)$, since each reduction strictly decreases the size of the graph, and the algorithm of Lemma~\ref{lemmanewDelta} also runs in time $O(n^2)$.

  \medskip

  Now, let $G$ be a given connected bipartite graph $G$ of maximum degree at most $3$. If $G$ is not $3$-regular, then $G\in {\cal G}$, and the desired statement already follows. Hence, we may assume that $G$ is $3$-regular, which implies that its two partite sets $A$ and $B$ are of the same order. By \cite{peraso}, we can efficiently decide whether $\nu_{ur}(G)=\nu(G)$. Furthermore, if $\nu_{ur}(G)=\nu(G)$, then, again by \cite{peraso}, we can efficiently determine a maximum matching that is uniquely restricted. Hence, we may assume that $\nu_{ur}(G)<\nu(G)$. This implies that $\nu_{ur}(G)<|A|$, and, hence, there is some vertex $u$ {in $V(G)$ such that}  $\nu_{ur}(G-u)=\nu_{ur}(G)$. Since $G-u\in {\cal G}$ for every vertex $u$ of $G$, considering the $n(G)$ induced subgraphs $G-u$ for $u\in V(G)$, one can determine in polynomial time a uniquely restricted matching $M$ of $G$ with $|M|\geq \max\{\alpha \nu_{ur}(G-u):u\in V(G)\} = \alpha\nu_{ur}(G)$.
  Thus, we obtain an algorithm that finds the desired uniquely restricted matching. Note that since we make $O(n)$ to the algorithm for graphs in ${\cal G}$,  the overall running time is $O(n^3)$.
\end{proofof}

\section{Upper bounds on $\chi_{ur}'(G)$}\label{sec:edge-colorings}

Our first result in this section applies to general graphs,
and its proof relies on a natural greedy strategy.
Faudree, Schelp, Gy\'{a}rf\'{a}s, and Tuza conjectured
$\chi_s'(G)\leq \Delta^2$
for a bipartite graph $G$ of maximum degree $\Delta$,
and our Theorem \ref{theorem2} can be considered a weak version of this conjecture.
Theorem \ref{theorem3} below shows that excluding the unique extremal graph
from Theorem \ref{theorem2},
the uniquely restricted chromatic index of bipartite graphs drops considerably.

\begin{theorem}\label{theorem2}
If $G$ is a connected graph of maximum degree at most $\Delta$, then $\chi_{ur}'(G)\leq \Delta^2$ with equality if and only if $G$ is $K_{\Delta,\Delta}$.
\end{theorem}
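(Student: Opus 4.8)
The plan is to establish the bound $\chi'_{ur}(G)\le \Delta^2$ by a greedy edge-coloring, and then to pin down the extremal case by examining exactly when the greedy analysis is tight.

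For the upper bound, I would process the edges of $G$ in an arbitrary order and, keeping a palette of $\Delta^2$ colors, assign to each edge $e=uv$ a color $c$ such that the current color class $M_c$ together with $e$ is still a uniquely restricted matching; it suffices to show that at most $\Delta^2-1$ colors are \emph{forbidden} for $e$. A color $c$ is forbidden for one of two reasons. First (the matching conflict), $M_c$ may already contain an edge incident to $u$ or to $v$; since edges sharing an endpoint receive distinct colors, the edges at $u$ other than $e$ use at most $d(u)-1\le\Delta-1$ colors and likewise at $v$, so this accounts for at most $2\Delta-2$ colors. Second (the alternating-cycle conflict), $M_c\cup\{e\}$ may contain an alternating cycle; as $M_c$ is already uniquely restricted, such a cycle uses $e$ and hence yields an $M_c$-alternating path from $u$ to $v$ whose first edge $up_1$ lies outside $M_c$. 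I would map each such color $c$ to the first matching edge $e_c=p_1p_2\in M_c$ of this path, where $p_1\in N(u)\setminus\{v\}$ and $p_2\ne u$. This map is injective because distinct colors give distinct edges, and every image $e_c$ is an edge not incident to $u$ but having an endpoint in $N(u)\setminus\{v\}$; there are at most $(\Delta-1)^2$ such edges in $G$. Summing, at most $(2\Delta-2)+(\Delta-1)^2=\Delta^2-1$ colors are forbidden, so a free color always exists and $\chi'_{ur}(G)\le\Delta^2$.

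That $K_{\Delta,\Delta}$ attains the bound is immediate: any two independent edges of $K_{\Delta,\Delta}$ span a $4$-cycle that is alternating with respect to the pair, so every uniquely restricted matching has exactly one edge; since $K_{\Delta,\Delta}$ has $\Delta^2$ edges, each must receive its own color, giving $\chi'_{ur}(K_{\Delta,\Delta})=\Delta^2$.

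For the converse I would argue the contrapositive: if $G$ is connected with maximum degree at most $\Delta$ and $G\ne K_{\Delta,\Delta}$, then $\chi'_{ur}(G)\le\Delta^2-1$. The strategy is to choose the processing order so that the forbidden-color count drops below $\Delta^2-1$ for every edge, which then forces a coloring with only $\Delta^2-1$ colors. The main obstacle, and the technical heart of the statement, is the extremal analysis: equality $(2\Delta-2)+(\Delta-1)^2$ in the count above is extremely rigid. It requires $d(u)=d(v)=\Delta$, that the $\Delta-1$ colors at $u$, the $\Delta-1$ colors at $v$, and the $(\Delta-1)^2$ alternating-cycle colors be pairwise disjoint, and that all $(\Delta-1)^2$ candidate edges $e_c$ genuinely exist, be distinctly colored, and each extend to an alternating $u$--$v$ path (in the simplest case closing a $4$-cycle, which forces the far endpoint $p_2$ to lie in $N(v)$). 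I expect to show that if such a tight edge $uv$ exists then, using that $N(u)$ and $N(v)$ must each be adjacent in the required complete pattern, connectivity propagates this local completeness across $G$ and forces $G=K_{\Delta,\Delta}$; conversely, any defect --- a vertex of degree less than $\Delta$, a non-edge between the relevant neighborhoods, or a coincidence among the stars --- creates slack of at least one color at some edge, and an appropriate ordering converts this single unit of slack into a global saving of one color. Making this propagation precise, rather than the greedy bound itself, is where the real work lies.
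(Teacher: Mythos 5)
Your upper bound argument is correct, and it takes a genuinely different route from the paper's: you run a per-edge greedy and bound the forbidden colors directly ($2\Delta-2$ matching conflicts, plus an injection from alternating-cycle conflicts into the at most $(\Delta-1)^2$ edges that avoid $u$ but meet $N_G(u)\setminus\{v\}$), whereas the paper orders the vertices, colors all forward edges at each vertex avoiding every color seen at its neighbors, and then excludes alternating cycles by an index-monotonicity argument ending in the contradiction $r_1<r_2<\cdots<r_k<r_1$. Your counting is clean, the invariant that each color class stays uniquely restricted closes the induction, and the computation $\chi'_{ur}(K_{\Delta,\Delta})=\Delta^2$ is fine.

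The genuine gap is the converse: your argument that $G\neq K_{\Delta,\Delta}$ implies $\chi'_{ur}(G)\le\Delta^2-1$ is a plan, not a proof, as you acknowledge. Two concrete obstacles. First, slack at a single edge does not suffice for a greedy scheme: to run greedy with $\Delta^2-1$ colors you need the forbidden count to be at most $\Delta^2-2$ at \emph{every} edge at the moment it is processed, and producing an ordering with that property is a Brooks-type statement whose proof is exactly the missing content; nothing in your sketch shows how one unit of slack travels along the order. Second, the rigidity you invoke is rigidity of a \emph{partial coloring}, not of the graph: the $(\Delta-1)^2$ alternating-cycle conflicts need not be realized by $4$-cycles (the $M_c$-alternating paths can be long), so tightness at an edge $uv$ does not force adjacency between $N_G(u)$ and $N_G(v)$, and the claim that ``local completeness propagates by connectivity to force $G=K_{\Delta,\Delta}$'' does not follow from your analysis. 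The paper sidesteps both issues with a different mechanism: among all uniquely restricted edge colorings with colors in $[\Delta^2]$ it fixes one minimizing the number of edges of color $1$, and then performs explicit local recolorings (changing the colors of at most the three edges $uv$, $ux$, $xy$) to strictly decrease that number, reaching a contradiction unless $G=K_{\Delta,\Delta}$. To complete your proof you would need either to carry out such an exchange argument or to actually establish your ordering claim; as written, the characterization of equality is unproven.
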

\begin{proof} Since no two edges of $K_{\Delta,\Delta}$ form a uniquely restricted matching in this graph,
we obtain $\chi_{ur}'(K_{\Delta,\Delta})=|E(K_{\Delta,\Delta})|=\Delta^2$.
Now, let $G$ be a connected graph of maximum degree at most $\Delta$.
We first show that $\chi_{ur}'(G)\leq \Delta^2$.
In a second step, we show that $\chi_{ur}'(G)<\Delta^2$ provided that $G$ is not $K_{\Delta,\Delta}$.

We consider the vertices of $G$ in some linear order, say $u_1,\ldots,u_n$.
For $i$ from $1$ up to $n$,
we assume that the edges of $G$ incident with vertices in $\{ u_1,\ldots,u_{i-1}\}$ have already been colored,
and we color all edges between $u_i$ and $\{ u_{i+1},\ldots,u_n\}$
using distinct colors,
and avoiding any color that has already been used on a previously colored edge incident with some neighbor of $u_i$.
Since $u_i$ has at most $\Delta$ neighbors, each of which is incident with at most $\Delta$ edges,
this procedure requires at most $\Delta^2$ many distinct colors.

Suppose, for a contradiction, that some color class $M$ is not a uniquely restricted matching in $G$.
Since $M$ is a matching by construction, there is an $M$-alternating cycle $C$.
Let $C:u_{r_1}u_{s_1}u_{r_2}u_{s_2}\ldots u_{r_k}u_{s_k}u_{r_1}$
be such that
$r_1$ is the minimum index of any vertex on $C$, and
$u_{r_1}u_{s_k}\in M$.
These choices trivially imply $r_1<s_1$ and $r_1<r_2$.
If $r_2>s_1$, then $u_{r_1}u_{s_k}\in M$ implies that, when coloring the edge $u_{s_1}u_{r_2}$,
some edge incident with the neighbor $u_{r_1}$ of $u_{s_1}$ would already have been assigned the color of the edges in $M$,
and the above procedure would have avoided this color on $u_{s_1}u_{r_2}$.
Therefore, since $u_{r_1}u_{s_k}\in M$ and $u_{r_2}u_{s_1}\in M$, the coloring rules imply $r_2<s_1$,
that is, $r_1<r_2<s_1$.
Now, suppose that $r_i<r_{i+1}<s_i$ for some $i\in [k-1]$.
Since $u_{r_{i+1}}u_{s_i}\in M$ and $u_{r_{i+2}}u_{s_{i+1}}\in M$,
the coloring rules imply in turn
\begin{itemize}
\item[$\bullet$] $r_{i+1}<s_{i+1}$, since otherwise we would have colored $u_{r_{i+1}}u_{s_i}$ differently,
\item[$\bullet$] $r_{i+2}<s_{i+1}$, since otherwise we would have colored $u_{r_{i+2}}u_{s_{i+1}}$ differently, and
\item[$\bullet$] $r_{i+1}<r_{i+2}$, since otherwise we would have colored $u_{r_{i+1}}u_{s_i}$ differently.
\end{itemize}
It follows that $r_{i+1}<r_{i+2}<s_{i+1}$, where we identify $r_{k+1}$ with $r_1$.
Now, by an inductive argument, we obtain $r_1<r_2<\cdots<r_k<r_1$, which is a contradiction.

Altogether, we obtain $\chi_{ur}'(G)\leq \Delta^2$.

\medskip

\noindent Now, let $G$ be distinct from $K_{\Delta,\Delta}$, and we want to prove that
$\chi_{ur}'(G)<\Delta^2$.
Among all uniquely restricted edge colorings of $G$  using colors in $[\Delta^2]$,
we choose a coloring for which the number of edges with color $1$ is as small as possible.
Clearly, we may assume that some edge $uv$ has color $1$, as otherwise we already have that $\chi_{ur}'(G)<\Delta^2$.

If there is a color $\alpha$ in $[\Delta^2]\setminus \{ 1\}$
such that no edge incident with a neighbor of $u$ has color $\alpha$,
then changing the color of $uv$ to $\alpha$
yields a uniquely restricted edge coloring of $G$
with less edges of color $1$, which is a contradiction.
In view of the maximum degree,
this implies that every vertex in $N_G[u]$ has degree $\Delta$,
the set $N_G(u)$ is independent, and,
for every color $\alpha$ in $[\Delta^2]$,
there is exactly one edge incident with a neighbor of $u$ that has color $\alpha$.


Since $G$ is not $K_{\Delta,\Delta}$,
some neighbor $x$ of $u$ has a neighbor $y$ that does not lie in $N_G(v)$.
Without loss of generality, let $ux$ have color $2$, and let $xy$ have color $3$.
Let $M$ be the set of edges with color $3$.

If $G$ does not contain an $M$-alternating path of odd length at least $3$
between $x$ and a vertex in $N_G(v)\setminus \{ u\}$ that contains the edge $xy$,
then changing the color of $uv$ to $3$
yields a uniquely restricted edge coloring of $G$
with less edges of color $1$, which is a contradiction.
Hence, $G$ contains such a path,
which implies that two edges incident with neighbors of $y$ have color $3$.

If there is a color $\alpha$ in $[\Delta^2]\setminus \{ 1\}$
such that no edge incident with a neighbor of $y$ has color $\alpha$,
then changing the color of $xy$ to $\alpha$ and the color of $uv$ to $3$
yields a uniquely restricted edge coloring of $G$
with less edges of color $1$, which is a contradiction.
Similarly as above, this implies that,
for every color $\alpha$ in $[\Delta^2]\setminus \{ 1,3\}$,
there is exactly one edge incident with a neighbor of $y$ that has color $\alpha$.
Now, changing
the color of $uv$ to $2$,
the color of $ux$ to $3$, and
the color of $xy$ to $2$
yields a uniquely restricted edge coloring of $G$
with less edges of color $1$, which is a contradiction.
This completes the proof.\end{proof}

As observed above, the proof of Theorem \ref{theorem2} is algorithmic;
the simple greedy strategy considered in its first half
efficiently constructs uniquely restricted edge colorings
using at most $\Delta^2$ colors.
Furthermore, also its second half can be turned into an efficient algorithm
that finds uniquely restricted edge colorings using at most $\Delta^2-1$ colors
for connected graphs of maximum degree $\Delta$ that are distinct from $K_{\Delta,\Delta}$; the different cases considered in the proof
correspond to simple manipulations of a given uniquely restricted edge coloring
that iteratively reduce the number of edges of color $1$ down to $0$. Golumbic, Hirst, and Lewenstein~\cite{gohile} showed that deciding whether a given matching is uniquely restricted can be done in polynomial time, and their algorithm can be used to decide which of the simple manipulations can be executed.

Our next goal is to improve Theorem \ref{theorem2} for bipartite graphs.
The following proof was inspired by Lov\'{a}sz's~\cite{lo} elegant proof of Brooks' Theorem.

\begin{lemma}\label{lemma1}
If $G$ is a connected bipartite graph of maximum degree at most $\Delta \geq 4$  that is distinct from $K_{\Delta,\Delta}$,
and $M$ is a matching in $G$, then $M$ can be partitioned into at most $\Delta-1$ uniquely restricted matchings in $G$.
\end{lemma}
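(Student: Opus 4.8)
The plan is to recast the statement as a bounded-degree digraph-colouring problem. Fix the bipartition $A,B$ and write each edge of $M$ as $e=a_eb_e$ with $a_e\in A$ and $b_e\in B$. Let $D$ be the digraph with vertex set $M$ in which there is an arc from $e$ to $e'$ exactly when $b_ea_{e'}\in E(G)$. The key observation is that, because $M$ is a matching, a connecting edge $b_ea_{e'}$ is never itself in $M$ (the vertex $b_e$ is already saturated by $e$); consequently, for any submatching $M_i\subseteq M$, the $M_i$-alternating cycles in $G$ correspond bijectively to the directed cycles of $D$ that stay inside $M_i$. Thus $M_i$ is uniquely restricted if and only if $D[M_i]$ is acyclic, and a partition of $M$ into $\Delta-1$ uniquely restricted matchings is precisely a partition of $V(D)$ into $\Delta-1$ acyclic induced subdigraphs (a dichromatic-number bound). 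The relevant degree count is immediate: the out-neighbours of $e=a_eb_e$ are the $M$-saturated $A$-neighbours of $b_e$ other than $a_e$, so $d^+_D(e)\le d_G(b_e)-1\le\Delta-1$, and symmetrically $d^-_D(e)\le\Delta-1$; write $\Delta_{\max}(D)=\max_e\max(d^+_D(e),d^-_D(e))\le\Delta-1$.

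With this in hand, the easy half already gives $\Delta$ classes. Order the edges of $M$ arbitrarily as $e_1,\dots,e_m$ and colour them in this order, assigning to each $e_i$ a colour distinct from those already given to its out-neighbours among $e_1,\dots,e_{i-1}$; there are at most $\Delta-1$ of these, so $\Delta$ colours suffice. Were some class to contain a monochromatic directed cycle, its highest-indexed vertex would have its cycle-successor among its earlier out-neighbours and of the same colour, contradicting the rule; hence every class is acyclic. The whole difficulty is to remove the last colour: we are asking for dichromatic number at most $\Delta-1=\Delta_{\max}(D)$, with no slack, which is exactly a Brooks-type statement.

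To save the last colour I would work on each component of $D$ (in the underlying undirected sense) and adapt Lov\'{a}sz's proof of Brooks' theorem. If a component has all out- and in-degrees at most $\Delta-2$, the greedy above already uses at most $\Delta-1$ colours, so I may assume its maximum degree equals $\Delta-1$. For such a component the aim is a vertex ordering in which every vertex except a chosen root has an out-neighbour that comes later; then the forward greedy forbids at most $\Delta-2$ colours at each non-root vertex, and the root is absorbed by arranging that two of its earlier out-neighbours receive the same colour. The argument naturally isolates a short list of extremal components on which this scheme fails by exactly one --- directed cycles, bidirected odd cycles, and bidirected complete digraphs --- and the real content is checking these against the hypotheses. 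A bidirected arc $e\leftrightarrow e'$ means $\{a_e,b_e,a_{e'},b_{e'}\}$ induces a $C_4$ in $G$, so a bidirected complete component on $m$ edges spans a $K_{m,m}$; such a component has maximum degree $m-1\le\Delta-1$, hence $m\le\Delta$, and only $m=\Delta$ is problematic, in which case every vertex of the spanned $K_{\Delta,\Delta}$ already realises its full degree inside it, forcing $G=K_{\Delta,\Delta}$ by connectivity --- excluded by hypothesis. A directed cycle has dichromatic number $2$ and a bidirected odd cycle has dichromatic number $3$, both at most $\Delta-1$ precisely because $\Delta\ge 4$; this is where that hypothesis is used.

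The main obstacle is this last step: carrying out the directed Brooks argument cleanly, in particular constructing the ordering together with the colour-saving device at the root (the directions of the arcs make the existence of such an ordering genuinely more delicate than in the undirected case) and verifying that no extremal component other than those listed can occur. Alternatively, one could invoke the directed analogue of Brooks' theorem due to Harutyunyan and Mohar, after which only the correspondence between its exceptional digraphs and the two hypotheses $\Delta\ge4$ and $G\neq K_{\Delta,\Delta}$ would remain to be checked.
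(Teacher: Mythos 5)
Your proposal is correct, but it takes a genuinely different route from the paper's proof. Your reduction is sound: since $G$ is bipartite and a connecting edge $b_ea_{e'}$ is never a matching edge, the $M_i$-alternating cycles of $G$ (including alternating $C_4$'s, which become digons) correspond exactly to the directed cycles of $D[M_i]$, so the lemma is equivalent to bounding the dichromatic number of a digraph whose in- and out-degrees are all at most $\Delta-1$; moreover, your analysis of the exceptional digraphs uses the two hypotheses precisely where they are needed (bidirected odd cycles are harmless only because $\Delta\geq 4$, and a bidirected complete component on $\Delta$ vertices would force $G=K_{\Delta,\Delta}$ by the degree and connectivity argument you give). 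Note, however, that your first alternative --- adapting Lov\'{a}sz's argument to the digraph setting by hand --- is only sketched and, as you admit, is where the real work lies; your proof is complete only through the second alternative, i.e., invoking the Brooks-type theorem for digraph colorings of Harutyunyan and Mohar, whose exceptional connected digraphs are exactly the directed cycles, bidirected odd cycles, and bidirected complete digraphs you list. The paper does not pass through this reduction: it proves the statement from scratch, directly on the matching, by contracting the matching edges inside a spanning tree, taking a breadth-first order, coloring greedily so that each edge $a_ib_i$ has a designated endpoint $u_i$ with at most $\Delta-2$ earlier neighbors, and then disposing of the remaining $\Delta$-regular, perfect-matching case via a case analysis (two-edge vertex cuts, then a configuration of three special edges) --- in effect carrying out, in this special setting, the directed Brooks argument that you defer to the literature. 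Your route buys brevity, modularity, and a transparent explanation of both hypotheses; the paper's route buys a self-contained, elementary and explicitly constructive proof, which matters here because the paper stresses that its edge colorings can be computed efficiently (this lemma feeds directly into the bound $\chi'_{ur}(G)\leq\Delta^2-\Delta$ of Theorem~\ref{theorem3}).
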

\begin{proof} Let $A$ and $B$ be the partite sets of $G$, and let $R=V(G)\setminus V(M)$.
Note that $M$ is perfect if and only if $R$ is empty.
Whenever we consider a coloring of the edges in $M$, and $\alpha$ is one of the colors,
let $M_{\alpha}$ be the set of edges in $M$ colored with $\alpha$.

First, we assume that $R$ is empty, and that $G$ is not $\Delta$-regular.
By symmetry, we may assume that some vertex $a$ in $A$ has degree less than $\Delta$.
Let $ab\in M$.
Let $T$ be a spanning tree of $G$ that contains the edges in $M$.
Contracting within $T$ the edges from $M$,
rooting the resulting tree in the vertex corresponding to the edge $ab$,
and considering a breadth-first search order,
we obtain the existence of a linear order $a_1b_1,\ldots,a_nb_n$ of the edges in $M$
such that $ab=a_nb_n$, and, for every $i\in [n-1]$, there is an edge between $\{ a_i,b_i\}$ and $\{ a_{i+1},b_{i+1},\ldots,a_n,b_n\}$.
Since $a_n$ has degree less than $\Delta$, this implies that, for every $i\in [n]$,
some vertex $u_i$ in $\{ a_i,b_i\}$ has at most $\Delta-2$ neighbors in $\{ a_1,b_1,\ldots,a_{i-1},b_{i-1}\}$.
Now, we color the edges in $M$ greedily in the above linear order.
Specifically, for every $i$ from $1$ up to $n$, we color the edge $a_ib_i$ with some color $\alpha$ in $[\Delta-1]$ such that,
for every $j\in [i-1]$,
for which $u_i$ has a neighbor in $\{ a_j,b_j\}$,
the edge $a_jb_j$ is not colored with $\alpha$.
By the degree condition on $u_i$, such a coloring exists.
Suppose, for a contradiction, that $M_{\alpha}$ is not uniquely restricted for some color $\alpha$ in $[\Delta-1]$.
Let the edge $a_ib_i$ in $M_{\alpha}$ be such that it belongs to some $M_{\alpha}$-alternating cycle $C$,
and, subject to this condition, the index $i$ is maximum.
If the neighbor of $u_i$ on $C$ outside of $\{ a_i,b_i\}$ is in $\{ a_j,b_j\}$,
then the choice of the edge $a_ib_i$ implies $j<i$,
and the coloring rule implies that the edge $a_jb_j$ is not colored with $\alpha$,
which is a contradiction.
Altogether, the statement follows.

Next, we assume that $R$ is non-empty.
Let $K$ be a component of $G-R$.
Let $M_K$ be the set of edges in $M$ that lie in $K$.
Since $G$ is connected, the graph $K$ is not $\Delta$-regular.
Therefore, proceeding exactly as above, we obtain a coloring of the edges in $M_K$ using the colors in $[\Delta-1]$
such that each color class is a uniquely restricted matching in $K$.
If $K_1,\ldots,K_k$ are the components of $G-R$,
and $M_i$ is a uniquely restricted matching in $K_i$ for every $i\in [k]$,
then $M_1\cup \cdots \cup M_k$ is a uniquely restricted matching in $G$.
Therefore, combining the colorings within the different components,
we obtain that also in this case the statement follows.

At this point, we may assume that $G$ is $\Delta$-regular, and that $M$ is perfect.

Next, we assume that there are two distinct edges $e$ and $e'$ in $M$ such that $V(\{ e,e'\})$
is a vertex cut of $G$.
This implies that we can partition the set $M\setminus \{ e,e'\}$ into two non-empty sets $M_1$ and $M_2$
such that there is no edge between $V(M_1)$ and $V(M_2)$.
For $i\in [2]$, let $G_i$ be the subgraph of $G$ induced by $V(\{ e,e'\}\cup M_i)$.
Since $G$ is connected, the graph $G_i$ is not $\Delta$-regular.
In view of the above,
this implies that there is a coloring $c_i$ of the edges of the perfect matching $\{ e,e'\}\cup M_i$ of $G_i$
using the colors in $[\Delta-1]$ such that each color class of $c_i$ is a uniquely restricted matching in $G_i$.
If $c_i(e)\not=c_i(e')$ for both $i$ in $[2]$, then we may assume that $c_1$ and $c_2$ assign the same colors to $e$ and $e'$,
and it is easy to verify that the common extension $c$ of $c_1$ and $c_2$ to $M$
has the property that every color class of $c$ is a uniquely restricted matching in $G$.
Hence, we may assume that necessarily $c_1(e)=c_1(e')$.
Note that this implies in particular that at least one of the two possible edges between $V(\{ e\})$ and $V(\{ e'\})$ is missing.

Let $c_1(e)=\alpha$.
Let $e=ab$, $e'=a'b'$, and $U=\{ a,b,a',b'\}$.
For every vertex $u\in U$,
let $C_1(u)$ be the set of colors $\beta$ for which $M_1$ contains an edge $vw$ with $c_1(vw)=\beta$ such that $u$ is adjacent to $v$ or $w$.
If there is some $u\in U$ and some color $\beta\in ([\Delta-1]\setminus \{ \alpha\})\setminus C_1(u)$,
then changing the color of the unique edge in $\{ e,e'\}$ incident with $u$ from $\alpha$ to $\beta$
yields a coloring $c_1'$ of the edges in $\{ e,e'\}\cup M_1$
using the colors in $[\Delta-1]$ such that each color class of $c_1'$ is a uniquely restricted matching in $G_1$.
Furthermore, $c_1'(e)\not=c_1'(e')$, which is a contradiction.
This implies that $[\Delta-1]\setminus \{ \alpha\}\subseteq C_1(u)$ for every $u\in U$.
In particular, each vertex $u$ in $U$ has at least $\Delta-2$ neighbors in $V(M_1)$,
and, hence, at most one neighbor in $V(M_2)$.
Let $C_2(u)$ for $u\in U$ be defined analogously as above.
Clearly, the set $C_2(a)\cup C_2(a')$ contains at most two distinct colors.
Since $\Delta-1\geq 3$, we may assume that $c_2$ is such that the set $C_2(a)\cup C_2(a')$ does not contain the color $\alpha$.
Now, let $c_2'$ be a coloring of the edges in $\{ e,e'\}\cup M_2$
that coincides with $c_2$ on $M_2$ and colors $e$ and $e'$ with color $\alpha$.
It is easy to see that each color class of $c_2'$ is a uniquely restricted matching in $G_2$.
Let $c$ be the common extension of $c_1$ and $c_2'$ to $M$.
Suppose, for a contradiction,
that the color class $M_{\beta}$ of $c$ is not uniquely restricted for some color $\beta$ in $[\Delta-1]$.
Clearly, we have $\beta=\alpha$.
Let $C$ be an $M_{\alpha}$-alternating cycle in $G$.
It is easy to see that $C$ contains both edges $e$ and $e'$
Furthermore, since at least one of the two possible edges between $\{a,b\}$ and $\{a',b'\}$ is missing, it follows that $C$ contains an edge between $\{ a,a'\}$ and $V(M_2)$.
Since $c$ coincides with $c_2$ on $M_2$, and $C_2(a)\cup C_2(a')$ does not contain $\alpha$,
we obtain a contradiction.

Altogether, we may assume that there are no two distinct edges $e$ and $e'$ in $M$ such that $V(\{ e,e'\})$ is a vertex cut of $G$.

Now, we show the existence of three edges $ab$, $a'b'$, and $a''b''$ in $M$ such that
some of the two possible edges between $\{ a',b'\}$ and $\{ a'',b''\}$ is missing,
and
either $a$ is adjacent to $b'$ as well as $b''$
or $b$ is adjacent to $a'$ as well as $a''$.
Therefore, let $a_1b_1$ be an edge in $M$.
Let $a_2b_2,\ldots,a_{\Delta}b_{\Delta}$ be the edges in $M$ such that $N_G(a_1)=\{ b_1,\ldots,b_{\Delta}\}$.
We may assume that $\{ a_2,b_2,\ldots,a_{\Delta},b_{\Delta}\}$ induces a complete bipartite graph $K_{\Delta-1,\Delta-1}$;
otherwise, we find the three edges with the desired properties.
Since $G$ is not $K_{\Delta,\Delta}$, the vertex $b_1$ is non-adjacent to some vertex $a_i$ in $\{ a_2,\ldots,a_{\Delta}\}$.
Now, if $a_j\in \{ a_2,\ldots,a_{\Delta}\}\setminus \{ a_i\}$,
then one of the two possible edges between $\{ a_1,b_1\}$ and $\{ a_i,b_i\}$ is missing,
and $b_j$ is adjacent to $a_1$ as well as $a_i$.
Altogether, we obtain three edges $ab$, $a'b'$, and $a''b''$ in $M$ with the desired properties.

By symmetry, we may assume that
$a$ is adjacent to $b'$ and $b''$,
and $a'$ is non-adjacent to $b''$.
In view of the above,
the graph $G'=G-V(\{ a'b',a''b''\})$ is connected, and $M'=M\setminus \{ a'b',a''b''\}$ is a perfect matching of $G'$.
Let $T'$ be a spanning tree of $G'$ that contains the edges in $M'$.
Contracting within $T'$ the edges from $M'$,
rooting the resulting tree in the vertex corresponding to the edge $ab$,
and considering a breadth-first search order,
we obtain the existence of a linear order $a_3b_3,\ldots,a_nb_n$ of the edges in $M'$
such that $ab=a_nb_n$, and, for every $i\in [n-1]\setminus [2]$,
there is an edge between $\{ a_i,b_i\}$ and $\{ a_{i+1},b_{i+1},\ldots,a_n,b_n\}$.
Now, we color the edges in $M$ greedily in the linear order $a_1b_1,a_2b_2,a_3b_3,\ldots,a_nb_n$,
where $a_1b_1=a''b''$ and $a_2b_2=a'b'$.
Note that, for every $i\in [n-1]\setminus [2]$,
some vertex $u_i$ in $\{ a_i,b_i\}$ has at most $\Delta-2$ neighbors in $\{ a_1,b_1,\ldots,a_{i-1},b_{i-1}\}$.
We color $a_1b_1$ and $a_2b_2$ with the same color.
For every $i$ from $3$ up to $n-1$,
we color the edge $a_ib_i$ with a color $\alpha$ in $[\Delta-1]$ such that,
for every $j\in [i-1]$,
for which $u_i$ has a neighbor in $\{ a_j,b_j\}$,
the edge $a_jb_j$ is not colored with $\alpha$.
By the degree condition on $u_i$, such a coloring exists.
Finally, since $a_n$ has neighbors in the two edges $a_1b_1$ and $a_2b_2$ that are colored with the same color,
there is some color $\alpha$ in $[\Delta-1]$ for which no edge $a_ib_i$ with $i\in [n-1]$ such that $a_n$ is adjacent to $b_i$,
is colored with $\alpha$,
and we color the edge $a_nb_n$ with that color $\alpha$.
Suppose, for a contradiction, that $M_{\beta}$ is not uniquely restricted for some color $\beta$ in $[\Delta-1]$.
Let the edge $a_ib_i$ in $M_{\beta}$ be such that it belongs to some $M_{\beta}$-alternating cycle $C$,
and, subject to this condition, the index $i$ is maximum.
Since $a'$ is non-adjacent to $b''$, we have $i\geq 3$.
Let $u_n=a_n$.
If the neighbor of $u_i$ on $C$ outside of $\{ a_i,b_i\}$ is in $\{ a_j,b_j\}$,
then the choice of the edge $a_ib_i$ implies $j<i$,
and the coloring rule implies that the edge $a_jb_j$ is not colored with $\beta$,
which is a contradiction. This completes the proof.
\end{proof}

Lemma \ref{lemma1} fails for $\Delta=3$;
the matching $\{ a_1b_1,a_2b_2,a_3b_3,a_4b_4,a_5b_5\}$ of the graph $G$ in Fig. \ref{fig1}
cannot be partitioned into two uniquely restricted matchings.
Note that the matching $\{ a_1b_3,a_2b_1,a_3b_5,a_4b_2,a_5b_4\}$ though
is the union of the two uniquely restricted matchings
$\{ a_1b_3,a_3b_5\}$ and $\{ a_2b_1,a_4b_2,a_5b_4\}$.

\begin{figure}[htb]
\begin{center}
\unitlength 1.3mm 
\linethickness{0.4pt}
\ifx\plotpoint\undefined\newsavebox{\plotpoint}\fi 
\begin{picture}(41,19)(0,0)
\put(0,5){\circle*{1}}
\put(10,5){\circle*{1}}
\put(20,5){\circle*{1}}
\put(30,5){\circle*{1}}
\put(40,5){\circle*{1}}
\put(0,1){\makebox(0,0)[cc]{$a_1$}}
\put(10,1){\makebox(0,0)[cc]{$a_2$}}
\put(20,1){\makebox(0,0)[cc]{$a_3$}}
\put(30,1){\makebox(0,0)[cc]{$a_4$}}
\put(40,1){\makebox(0,0)[cc]{$a_5$}}
\put(0,15){\circle*{1}}
\put(10,15){\circle*{1}}
\put(20,15){\circle*{1}}
\put(30,15){\circle*{1}}
\put(40,15){\circle*{1}}
\put(0,19){\makebox(0,0)[cc]{$b_1$}}
\put(10,19){\makebox(0,0)[cc]{$b_2$}}
\put(20,19){\makebox(0,0)[cc]{$b_3$}}
\put(30,19){\makebox(0,0)[cc]{$b_4$}}
\put(40,19){\makebox(0,0)[cc]{$b_5$}}
\put(0,15){\line(0,-1){10}}
\put(0,5){\line(1,1){10}}
\put(10,15){\line(0,-1){10}}
\put(20,15){\line(-2,-1){20}}
\put(20,5){\line(0,1){10}}
\put(20,5){\line(-2,1){20}}
\put(0,15){\line(1,-1){10}}
\put(30,15){\line(1,-1){10}}
\put(40,5){\line(0,1){10}}
\put(40,15){\line(-1,-1){10}}
\put(30,5){\line(0,1){10}}
\put(20,15){\line(2,-1){20}}
\put(40,15){\line(-2,-1){20}}
\put(30,15){\line(-2,-1){20}}
\put(10,15){\line(2,-1){20}}
\end{picture}
\end{center}
\vspace{-.3cm}
\caption{A bipartite graph $G$.}\label{fig1}
\end{figure}
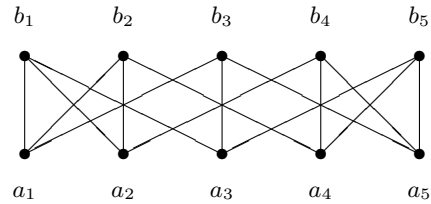
\noindent Lemma \ref{lemma1} also fails for non-bipartite graphs;
in fact, if $G$ arises from the disjoint union of two copies of $K_{\Delta}$ by adding a perfect matching $M$,
then every partition of $M$ into uniquely restricted matchings requires $\Delta$ sets.

With Lemma \ref{lemma1} at hand, the proof of our final result is easy.

\begin{theorem}\label{theorem3}
If $G$ is a connected bipartite graph of maximum degree at most $\Delta \geq 4$ that is distinct from $K_{\Delta,\Delta}$,
then $\chi'_{ur}(G)\leq \Delta^2-\Delta$.
\end{theorem}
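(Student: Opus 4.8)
The plan is to combine Lemma~\ref{lemma1} with a classical edge coloring of $G$ into ordinary matchings. First I would invoke König's edge coloring theorem, which asserts that every bipartite graph has chromatic index equal to its maximum degree; since $G$ has maximum degree at most $\Delta$, this gives $\chi'(G)\leq \Delta$, so the edge set $E(G)$ can be partitioned (efficiently) into matchings $M_1,\ldots,M_\Delta$, some of which may be empty.

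Next, I would apply Lemma~\ref{lemma1} separately to each matching $M_i$. The crucial observation is that the hypotheses of that lemma are conditions on the host graph $G$ --- connected, bipartite, maximum degree at most $\Delta\geq 4$, and distinct from $K_{\Delta,\Delta}$ --- all of which hold by assumption, while $M_i$ enters only as an arbitrary matching \emph{in} $G$. Hence Lemma~\ref{lemma1} applies verbatim to each $M_i$, and each one can be partitioned into at most $\Delta-1$ uniquely restricted matchings of $G$.

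Finally, taking the union of these $\Delta$ partitions yields a partition of $E(G)=M_1\cup\cdots\cup M_\Delta$ into at most $\Delta\cdot(\Delta-1)=\Delta^2-\Delta$ uniquely restricted matchings of $G$ (the empty $M_i$ contributing nothing). This establishes $\chi'_{ur}(G)\leq \Delta^2-\Delta$, as desired.

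There is essentially no obstacle in this argument: Lemma~\ref{lemma1} carries all the weight, and the only point genuinely worth verifying is that its hypotheses constrain $G$ rather than the individual $M_i$, so a single check of the hypotheses suffices before reusing the lemma $\Delta$ times. This is exactly why the final result follows easily, as remarked just before the statement.
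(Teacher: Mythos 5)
Your proposal is correct and is essentially identical to the paper's own proof: partition $E(G)$ into $\Delta$ matchings via K\H{o}nig's edge coloring theorem, then apply Lemma~\ref{lemma1} to each matching to split it into at most $\Delta-1$ uniquely restricted matchings, giving the bound $\Delta^2-\Delta$. Your added observation that the hypotheses of Lemma~\ref{lemma1} constrain only the host graph $G$, not the individual matchings, is exactly the point that makes the repeated application legitimate.
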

\begin{proof}
Since $G$ is bipartite, its edge set can be partitioned into $\Delta$ matchings~\cite{lopl}.
By Lemma~\ref{lemma1}, each of these matchings can be partitioned into $\Delta-1$ uniquely restricted matchings. This completes the proof.
\end{proof}


\noindent Note that the graph $G$ in Fig.~\ref{fig1}
also satisfies $\chi'_{ur}(G)\leq \Delta^2-\Delta=9-3=6$.
In fact,
the uniquely restricted matchings
$\{ a_1b_1,a_4b_2,a_5b_4\}$,
$\{ a_1b_2,a_2b_4,a_5b_5\}$,
$\{ a_2b_1,a_3b_3,a_4b_5\}$,
$\{ a_1b_3,a_4b_4\}$,
$\{ a_2b_2,a_3b_5\}$, and
$\{ a_3b_1,a_5b_3\}$
partition $E(G)$.

\section{Concluding remarks}\label{sec:conclusions}

Our results motivate several open problems.
As stated above, we believe that the conclusion of Theorem~\ref{theoremnew1}
holds without the assumption of $C_4$-freeness.
We also believe that better approximation factors are possible,
and that approximation lower bounds in terms of the maximum degree could be proved.
One could study the approximability of
the uniquely restricted matching number
in other classes of graphs.
Finally,
complexity results concerning the uniquely restricted chromatic index
should be provided.

\bibliographystyle{abbrv}
\bibliography{urm}


\end{document}